\newtheorem{theorem}{Theorem}
\newtheorem{lem}{Lemma}
\newtheorem{corollary}{Corollary}
\theoremstyle{definition}
\newtheorem{definition}{Definition}
\newtheorem{remark}{Remark}
\newcommand{\norm}[1]{\left\Vert#1\right\Vert}
\newcommand{\abs}[1]{\left\vert#1\right\vert}
\newcommand{\bsk}{\boldsymbol{k}}
\newcommand{\bsx}{\boldsymbol{x}}
\newcommand{\bsz}{\boldsymbol{z}}
\newcommand{\bse}{\boldsymbol{e}}
\newcommand{\bsj}{\boldsymbol{j}}
\newcommand{\bsm}{\boldsymbol{m}}
\newcommand{\bsn}{\boldsymbol{n}}
\newcommand{\bsr}{\boldsymbol{r}}
\newcommand{\bsb}{\boldsymbol{b}}
\newcommand{\bsi}{\boldsymbol{i}}
\newcommand{\bstau}{\boldsymbol{\tau}}
\newcommand{\bsalpha}{\boldsymbol{\alpha}}
\newcommand{\bsy}{\boldsymbol{y}}
\newcommand{\bszero}{\boldsymbol{0}}
\newcommand{\cO}{\mathcal{O}}
\newcommand{\cH}{\mathcal{H}}
\newcommand{\cP}{\mathcal{P}}
\newcommand{\cB}{\mathcal{B}}
\newcommand{\rd}{\,\mathrm{d}}
\newcommand{\NN}{\mathbb{N}}
\newcommand{\ZZ}{\mathbb{Z}}
\newcommand{\RR}{\mathbb{R}}
\newcommand{\FF}{\mathbb{F}}
\newcommand{\DD}{\mathbb{D}}
\newcommand{\ee}{{\rm e}}
\newcommand{\uu}{\mathfrak{u}}
\newcommand{\supp}{\mathrm{supp}}
\newcommand{\err}{\mathrm{err}}
\newcommand{\sob}{\mathrm{sob}}
\definecolor{darkred}{RGB}{139,0,0}
\definecolor{darkgreen}{RGB}{0,100,0}
\definecolor{darkmagenta}{RGB}{139,0,139}
\begin{document}

\title{On the optimal order of integration in Hermite spaces with finite smoothness}

\author{Josef Dick\thanks{This research was supported under Australian Research Council’s Discovery Projects funding scheme (project number DP150101770).}, Christian Irrgeher, Gunther Leobacher\\ and Friedrich Pillichshammer\thanks{The authors are supported by the Austrian Science Fund (FWF): Projects F5508-N26 (Leobacher), F5509-N26 (Irrgeher and Pillichshammer) and F5506-N26 (Irrgeher), respectively, which are parts of the Special Research Program ``Quasi-Monte Carlo Methods: Theory and Applications''.}}

\maketitle

\begin{abstract}
We study the numerical approximation of integrals over $\RR^s$ with respect to the standard Gaussian measure for integrands which lie in certain Hermite spaces of functions. The decay rate of the associated sequence is specified by a single integer parameter which determines the smoothness classes and the inner product can be expressed via $L_2$ norms of the derivatives of the function.

We map higher order digital nets from the unit cube to a suitable subcube of $\mathbb{R}^s$ via a linear transformation and show that such rules achieve, apart from powers of $\log N$, the optimal rate of convergence of the integration error. 
\end{abstract}

\noindent\textbf{Keywords:} Numerical integration, worst-case error, higher order digital nets, Hermite polynomials

\noindent\textbf{2010 MSC:} 65D30, 65D32, 65Y20

\section{Introduction}

In this paper we study numerical integration of functions over the $s$-dimensional real space $\RR^s$ of the form 
\begin{align}\label{eq:intprob}
I_s(f)=\int_{\RR^s} f(\bsx) \varphi_s(\bsx) \rd \bsx,
\end{align}
where  $\varphi_s$ denotes the density of the $s$-dimensional standard Gaussian measure, 
\begin{align*}
\varphi_s(\bsx)=\frac{1}{(2 \pi)^{s/2}} \exp\left(-\frac{\bsx \cdot \bsx}{2}\right)\qquad \text{for } \bsx \in \RR^s.
\end{align*}
We assume that the integrands $f$ belong to a certain reproducing kernel Hilbert space $\cH_{s,\alpha}$ of smoothness $\alpha$ whose construction is based on Hermite polynomials and which is therefore called a Hermite space of smoothness $\alpha$. The exact definition of this space, which was introduced by Irrgeher and Leobacher \cite{IL}, will be given in Section~\ref{sec:fct_space}.

In order to approximate $I_s(f)$, without loss of generality, we use linear algorithms of the form $$A_{N,s}(f)=\sum_{i=1}^N w_i f(\bsx_i),$$ which are based on nodes $\bsx_1,\ldots,\bsx_N \in \RR^s$ and real weights $w_1,\ldots,w_N$ and study the worst-case absolute error $e(A_{N,s},\cH_{s,\alpha})$ of $A_{N,s}$ over the unit ball of the Hermite space, i.e. $$ e(A_{N,s},\cH_{s,\alpha}) = \sup_{\substack{f \in \cH_{s,\alpha}\\\|f\|_{s,\alpha}\leq1}} \left| I_s(f) - A_{N,s}(f) \right|. $$ The $N$-th minimal worst-case error $e(N,\cH_{s,\alpha})$ is the infimum of $e(A_{N,s},\cH_{s,\alpha})$ over all linear algorithms $A_{N,s}$ that use $N$ function values. 

For $F,G: D \subseteq \NN \rightarrow \RR$ we say $F(N) \lesssim G(N)$ if there exists some $c>0$ such that $F(N) \le c\, G(N)$ for all $N \in D$. If the positive quantity $c$ depends on some parameter, say $s$, then we may indicate this by writing $\lesssim_s$. We may use the symbol also the other way round $\gtrsim$ with the obvious meaning.

Our main result states that $e(N,\cH_{s,\alpha})$ is, up to some $\log N$-factors, of the exact order of magnitude $N^{-\alpha}$. More precisely, we show that 
\begin{align}\label{mainres}
\frac{(\log N)^{\frac{s-1}{2}}}{N^{\alpha}} \lesssim_{s,\alpha} e(N,\cH_{s,\alpha}) \lesssim_{s,\alpha}  \frac{(\log N)^{s \frac{2 \alpha+3}{4} - \frac{1}{2} }}{N^{\alpha}}.
\end{align}
For the upper bound we present an explicit algorithm. Note that we do not study about the dependence on the dimension and the smoothness of the implicit constants in \eqref{mainres}.\\

The paper is organized as follows: In the next section we will introduce the function space setting under consideration. We recall the definition of Hermite polynomials, give the definition of Hermite spaces and discuss their smoothness properties. Section~\ref{sec_int} is devoted to the numerical integration problem. After some further introductory words we will prove the lower bound from \eqref{mainres} in Subsection~\ref{subsec:lbd} (see Theorem~\ref{thm:lowerbound}). The upper bound from \eqref{mainres} will be presented in Subsections~\ref{sec:ANOVA} (Theorem~\ref{thm:upperbound}) and \ref{sec:construct} (Corollary~\ref{cor1}). In Section \ref{sec:numerics} we numerically compute the worst-case error of the presented algorithm as well as of two other types of quadrature rules and compare their performances.

\section{Hermite spaces of functions of finite smoothness}\label{sec:fct_space}

For $k \in \NN_0$, the $k$-th Hermite polynomial is given by 
\begin{align*}
H_k(x)=\frac{(-1)^k}{\sqrt{k!}} \exp(x^2/2) \frac{\rd^k}{\rd x^k} \exp(-x^2/2),
\end{align*}
which is sometimes also called normalized probabilistic Hermite polynomial, since $$\int_{\RR} H_k(x)^2 \varphi(x) \rd x=1,$$ where $\varphi$ is the standard normal density, $\varphi(x)=\frac{1}{\sqrt{2\pi}}\exp(-x^2/2)$. For example, 
\begin{align*}
H_0(x)=1,\ H_1(x)=x,\ H_2(x)=\tfrac{1}{\sqrt{2}}(x^2-1),\ H_3(x)=\tfrac{1}{\sqrt{6}}(x^3-3x),\ \ldots 
\end{align*}
Here we follow the definition given in \cite{B98}, but we remark that there are slightly different ways to introduce Hermite polynomials (see, e.g., \cite{szeg}). Note that the $\tau$th derivative of the $k$th Hermite polynomial is given by
\begin{align}\label{eq:hermitederiv}
\frac{\rd^{\tau}}{\rd x^{\tau}}H_{k}(x)=\begin{cases}\sqrt{\frac{k!}{(k-\tau)!}}\, H_{k-\tau}(x) &\text{if } k\geq\tau,\\0&\text{otherwise,}\end{cases}
\end{align}
(see, e.g., \cite{IL}). For $s \ge 2$, $\bsk=(k_1,\ldots,k_s)\in \NN_0^s$, and $\bsx=(x_1,\ldots,x_s)\in \RR^s$ we define $s$-dimensional Hermite polynomials by 
\begin{align*} 
H_{\bsk}(\bsx)=\prod_{j=1}^s H_{k_j}(x_j).
\end{align*} 

It is well-known (see~\cite{B98}) that the sequence of Hermite polynomials $\{H_{\bsk}(\bsx)\}_{\bsk \in \NN_0^s}$ forms an orthonormal basis of the function space $L^2(\RR^s,\varphi_s)$ of Gauss square-integrable functions. 
We know that for all $\bsk\in\NN_0^s$ the bound
\begin{align}\label{eq:cramer}
|H_{\bsk}(\bsx)\sqrt{\varphi_s(\bsx)}|\leq 1\qquad\text{for all }\bsx\in\RR^s
\end{align}
holds, which is a slightly weaker version of Cramer's bound (c.f.\ Sansone \cite{sansone}). The next lemma states a stronger bound on the Hermite polynomials.

\begin{lem}\label{lem:hermitebound}
For all $\bsk\in\NN_0^s$ and for all $\bsx\in\RR^s$ we have 
\begin{align}\label{eq:hermitebound}
|H_{\bsk}(\bsx)\sqrt{\varphi_s(\bsx)}|\leq \prod_{j=1}^{s}\min\left(1,\frac{\sqrt{\pi}}{k_j^{1/12}}\right).
\end{align}
\end{lem}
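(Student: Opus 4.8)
The plan is to tensorize and reduce to one dimension, and then to bound the sup-norm of the univariate Hermite functions through a turning-point analysis. Because $H_{\bsk}(\bsx)=\prod_{j=1}^{s}H_{k_j}(x_j)$ and $\varphi_s(\bsx)=\prod_{j=1}^{s}\varphi(x_j)$, the quantity on the left of \eqref{eq:hermitebound} factorizes as $\abs{H_{\bsk}(\bsx)\sqrt{\varphi_s(\bsx)}}=\prod_{j=1}^{s}\abs{H_{k_j}(x_j)\sqrt{\varphi(x_j)}}$, so it suffices to prove, for every $k\in\NN_0$ and every $x\in\RR$, the one-dimensional inequality $\abs{H_k(x)\sqrt{\varphi(x)}}\le\min(1,\sqrt{\pi}\,k^{-1/12})$. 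The bound by $1$ is exactly the $s=1$ case of \eqref{eq:cramer}, which we may take as given; moreover, it already settles the claim whenever $\sqrt{\pi}\,k^{-1/12}\ge 1$, that is, for all $k\le\pi^{6}$. The real content is therefore the decay estimate $\abs{H_k(x)\sqrt{\varphi(x)}}\le\sqrt{\pi}\,k^{-1/12}$ for large $k$.

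Write $\psi_k(x)=H_k(x)\sqrt{\varphi(x)}$, so that orthonormality of the $H_k$ in $L^2(\RR,\varphi)$ gives $\int_{\RR}\psi_k(x)^2\rd x=1$. Starting from the Hermite differential equation $H_k''-xH_k'+kH_k=0$ and $\psi_k=(2\pi)^{-1/4}H_k\,\ee^{-x^2/4}$, a direct computation yields the Schrödinger-type equation $\psi_k''(x)+Q_k(x)\psi_k(x)=0$ with $Q_k(x)=k+\tfrac12-\tfrac{x^2}{4}$ and turning points at $x=\pm x_0$, where $x_0:=\sqrt{4k+2}$. On the oscillatory interval $0<x<x_0$ one has $Q_k>0$ and $Q_k'(x)=-x/2<0$, so the Sonin--Pólya function $S_k=\psi_k^2+\psi_k'^2/Q_k$ obeys $S_k'=-Q_k'\psi_k'^2/Q_k^2\ge 0$ and is non-decreasing. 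Since $\psi_k'$ vanishes at each local extremum, there $\psi_k^2=S_k$; thus the successive extremal values of $\psi_k^2$ grow as $x$ increases toward the turning point. Combined with the exponential decay of $\psi_k$ in the forbidden region $x>x_0$ and with parity, this localizes the global maximum of $\abs{\psi_k}$ to a neighborhood of $x=\pm x_0$.

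The crux, and the step I expect to be the main obstacle, is to estimate $\abs{\psi_k}$ in this turning-point regime with an explicit constant uniform in $k$. Near $x=x_0$ the potential $Q_k$ is essentially linear, the equation is modeled by the Airy equation, and the rescaling $x=x_0-\xi\,(2/x_0)^{1/3}$ that linearizes $Q_k$ produces a local amplitude factor of order $(4k+2)^{-1/12}\asymp k^{-1/12}$. A WKB/matching computation fixes the oscillatory amplitude through the normalization: writing $Q_k=\tfrac14(x_0^2-x^2)$, the density $\psi_k^2$ behaves on average like $C^2/(2\sqrt{Q_k})=C^2/\sqrt{x_0^2-x^2}$, and $\int_{-x_0}^{x_0}\rd x/\sqrt{x_0^2-x^2}=\pi$ forces $C^2=1/\pi$. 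Combining this amplitude with the maximum $\max_{\xi\ge0}\mathrm{Ai}(-\xi)\approx 0.5357$ of the Airy profile gives a peak of order $0.54\,k^{-1/12}$, comfortably below $\sqrt{\pi}\,k^{-1/12}$, which explains both the sharp exponent and the slack in the constant.

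To turn this heuristic into a rigorous bound valid for all $k$ I would either invoke a known uniform sup-norm estimate for Hermite functions — such as the explicit inequalities of Krasikov or the classical turning-point bounds collected in Sansone \cite{sansone} — or carry out a quantitative Airy comparison with controlled error terms. The wide gap between the true constant $\approx 0.54$ and $\sqrt{\pi}\approx 1.77$ is what makes the clean constant attainable: even a comparatively crude uniform estimate in the turning-point regime, patched together with Cramér's bound \eqref{eq:cramer} for the finitely many small $k$ not covered by the asymptotic analysis, will suffice to close the proof.
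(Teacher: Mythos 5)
Your reduction to $s=1$ and the use of Cram\'er's bound \eqref{eq:cramer} for the finitely many small $k$ are exactly what the paper does, and your diagnosis of where the difficulty sits is accurate: everything hinges on a uniform sup-norm bound of order $k^{-1/12}$ for $H_k\sqrt{\varphi}$ with an explicit constant. The problem is that your proposal never actually establishes that bound. The Sonin--P\'olya monotonicity argument is rigorous and correctly localizes the maximum near the turning point, and the Airy/WKB matching correctly explains the exponent $1/12$ and the limiting constant near $0.54$; but the amplitude normalization $C^2=1/\pi$ and the peak value $\approx 0.54\,k^{-1/12}$ are semiclassical asymptotics without error control. Converting them into an inequality valid for \emph{every} $k$ with a specified constant is precisely the hard analytic content, and you explicitly defer it to ``invoke Krasikov or carry out a quantitative Airy comparison.'' As written, the argument is a proof outline plus a citation placeholder rather than a proof.

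The paper closes exactly this gap by quoting Krasikov's explicit inequality: for $k\ge 6$,
\[
H_k(x)^2\varphi(x)\le \tfrac{2}{3}\,C_k\exp\Bigl(\tfrac{15}{8}\bigl(1+\tfrac{12}{4(2k)^{1/3}-9}\bigr)\Bigr)\,k^{-1/6},
\]
with an explicit $C_k$ that increases to $2^{1/3}/\pi$, and then checks by direct computation that the resulting constant is at most $\pi$ for all $k\ge 876$, while Cram\'er's bound already realizes the minimum value $1$ in \eqref{eq:hermitebound} for $k\le 876$. To complete your route you must do the analogous bookkeeping: write down one concrete uniform estimate (Krasikov's, or an Olver-type turning-point bound with explicit error terms), verify that after taking square roots its constant is at most $\sqrt{\pi}$, and identify the finite range of $k$ where you instead fall back on \eqref{eq:cramer}. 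The slack between $0.54$ and $\sqrt{\pi}\approx 1.77$ makes this feasible, but the verification is the proof and has to be carried out.
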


The proof of this lemma will be deferred to the appendix, but we would like to remark that the upper bound is sharp with respect to $\bsk$, see \cite{K04}. From this lemma it follows that 
\begin{align}\label{defsigma}
\sigma_s(\bsk):=\|H_{\bsk}\sqrt{\varphi_s}\|_\infty\le \prod_{j=1}^{s}\min\left(1,\frac{\sqrt{\pi}}{k_j^{1/12}}\right).
\end{align}

For every square-integrable $f:\RR^s\to \RR$ the $\bsk$-th Hermite coefficient of $f$ is defined as $\widehat{f}(\bsk)=\int_{\RR^s} f(\bsx) H_{\bsk}(\bsx) \varphi_s(\bsx)\rd \bsx$. Now we define the Hermite space analogous to \cite{IL}, where the construction as well as basic properties are given in great detail.

\begin{definition}\label{def:hermite}
Let $s\in \NN$ and let $r:\NN_0^s\to (0,\infty)$ be a function satisfying 
\begin{align*}
\sum_{\bsk \in \NN_0^s}r(\bsk)\sigma_s(\bsk)^2<\infty. 
\end{align*}
Then the Hermite space corresponding to $r$ is the Hilbert space
\begin{align*}
\cH_r:=\left\{f:\RR^s\to \RR: f \text{ is continuous}, \int_{\RR^s}f(\bsx)^2\varphi_s(\bsx) \rd\bsx<\infty, \|f\|_r<\infty\right\},
\end{align*}
where $\|f\|_r^2:=\sum_{\bsk\in\NN_0^s}r(\bsk)^{-1}\widehat{f}(\bsk)^2$.
The inner product  in $\cH_r$ is thus given by
\begin{align*}
\langle f,g\rangle_r =\sum_{\bsk\in\NN_0^s}\frac{1}{r(\bsk)} \widehat{f}(\bsk) \widehat{g}(\bsk).
\end{align*}
\end{definition}

This definition of a Hermite space is slightly more general than that given in \cite{IL}. There it was required $\sum_{\bsk \in \NN_0^s}r(\bsk)<\infty$. From Lemma \ref{def:hermite} it follows that $\sum_{\bsk \in \NN_0^s}r(\bsk)<\infty$ implies $\sum_{\bsk\in\NN_0^s}r(\bsk)\sigma_s(\bsk)^2<\infty$.

To see that $\cH_r$ is indeed closed under this norm one needs to show that for $f\in \cH_r$ the Hermite series for $f$ converges to a continuous function. Indeed, applying the Cauchy-Schwarz inequality,
\begin{align*}
\sum_{\bsk\in\NN_0^s} |\widehat{f}(\bsk) H_{\bsk} (\bsx)\varphi_s(\bsx)^{1/2}|&\le \left(\sum_{\bsk\in\NN_0^s}r(\bsk)\sigma_s(\bsk)^2\right)^{\frac{1}{2}}\left( \sum_{\bsk\in\NN_0^s}r(\bsk)^{-1} \widehat{f}(\bsk)^2 \frac{H_{\bsk} (\bsx)^2\varphi_s(\bsx)}{\sigma_s(\bsk)^2} \right)^{\frac{1}{2}}\\
&\le \left(\sum_{\bsk\in\NN_0^s}r(\bsk)\sigma_s(\bsk)^2\right)^{\frac{1}{2}}\|f\|_{r}<\infty.
\end{align*}
Thus $\sum_{\bsk\in\NN_0^s}\widehat{f}(\bsk) H_{\bsk} \varphi_s^{1/2}$ is a series of continuous functions which converges uniformly, so its limit is continuous. Therefore also $\sum_{\bsk\in\NN_0^s} \widehat{f}(\bsk) H_{\bsk}= \varphi_s^{-1/2}\sum_{\bsk\in\NN_0^s} \widehat{f}(\bsk) H_{\bsk} \varphi_s^{1/2}$ is continuous.\\

We are now going to define the Hermite space of smoothness $\alpha$, which are characterized by a special choice of the $r(\bsk)$ for $\bsk\in\NN_0^s$. Let $s,\alpha\in\NN$. For all $\bsk\in\NN_0^s$ we define
\begin{align}\label{eq:rsalpha}
r_{s,\alpha}(\bsk)=\prod_{j=1}^{s} r_{\alpha}(k_j)
\end{align}
with
\begin{align*}
r_{\alpha}(k)=
\begin{cases}
1 & \text{ if } k=0\\\left(\sum_{\tau=0}^{\alpha}\beta_{\tau}(k)\right)^{-1} & \text{ if } k\ge 1
\end{cases}
\end{align*}
and for integers $\tau\geq 1$,
\begin{align*}
\beta_{\tau}(k)=\begin{cases}\frac{k!}{(k-\tau)!}&\text{if }k\geq\tau,\\0&\text{otherwise.}\end{cases}
\end{align*}
Note that we have 
\begin{align*}
r_{\alpha}(k) = \left(\sum_{\tau=0}^{\min(\alpha,k)} \frac{k!}{(k-\tau)!}\right)^{-1} \le \frac{(k- \min(\alpha,k))!}{k!} = 
\begin{cases}
\frac{1}{k!} & \text{ if } 1\le k \le \alpha,\\
\frac{(k-\alpha)!}{k!} & \text{ if } k \ge \alpha. 
\end{cases}
\end{align*}

It is easily shown that $\lim_{k \rightarrow \infty} r_{\alpha}(k) k^{\alpha} = 1 $. Hence 
\begin{align*}
r_{\alpha}(k) \asymp_\alpha \frac{1}{k^\alpha}\qquad\text{for } k\in\NN.
\end{align*}
Thus $\sum_{\bsk\in\NN_0^s} r_{s,\alpha}(\bsk)\sigma_s(\bsk)^2<\infty$ for all $\alpha\in \NN$, and we may consider the associated Hermite space.

\begin{definition}\rm
We call the Hermite space $\cH_{s,\alpha}$ corresponding to $r_{s,\alpha}$ as defined in \eqref{eq:rsalpha} a {\it Hermite space with smoothness $\alpha$}. We write $\|.\|_{s,\alpha}$ and $\langle \cdot , \cdot \rangle_{s,\alpha}$ for the norm and inner product, respectively,  of $\cH_{s,\alpha}$.
\end{definition}

The name {\it Hermite space with smoothness $\alpha$} will be justified below. In the following we recall some commonly used conventions for operations with multiindices 
\begin{itemize}
\item We denote the partial derivative by $\partial_{x_i}:=\frac{\partial}{\partial x_i}$ for any $i=1,\ldots,s$.
\item We denote the mixed partial derivatives with respect to $\bsx$ by
\begin{align*}
\partial_{\bsx}^{\bstau}:=\frac{\partial^{\abs{\bstau}}}{\partial\bsx^{\bstau}}=\frac{\partial^{\tau_1}\cdots\partial^{\tau_s}}{\partial x_1^{\tau_1}\cdots\partial x_s^{\tau_s}}
\end{align*}
for any $\bstau=(\tau_1,\ldots,\tau_s)\in\NN_0^s$, where $|\bstau|=\tau_1+\cdots+\tau_s$. 
\item For vectors $\bsn=(n_1,\ldots,n_s)$ and $\bsk=(k_1,\ldots,k_s)$ we use the following notation: $$\bsn!=\prod_{j=1}^s n_j!, \ \ \binom{\bsn}{\bsk}=\prod_{j=1}^s \binom{n_j}{k_j}, \ \ |\bsn| =\sum_{j=1}^s |n_j|, \ \ \bsn \cdot \bsk=\sum_{j=1}^s n_j k_j.$$ Furthermore, $\bsn \ge \bsk$ means that $n_j \ge k_j$ for all $j \in \{1,2,\ldots,s\}$.
\end{itemize}
For $f\in\cH_{s,\alpha}$ we have the Hermite expansion, see \cite{IL},
\begin{align*}
f(\bsx)=\sum_{\bsk\in\NN_0^s}\widehat{f}(\bsk)H_{\bsk}(\bsx)\qquad\text{for all }\bsx\in\RR^s
\end{align*}
and for any $\bstau\in\NN_0^s$ with $\bstau \le \boldsymbol{\alpha}$ we have, see Lemma~\ref{le_app} in the Appendix, that
\begin{align*}
\partial_{\bsx}^{\bstau}f = \sum_{\bsk\geq\bstau}\widehat{f}(\bsk)\sqrt{\frac{\bsk!}{(\bsk-\bstau)!}}\,H_{\bsk-\bstau}.
\end{align*}

Using an analogous expression for $g$, we obtain using Parseval's theorem that
\begin{align*}
\langle f,g\rangle_{s,\alpha}&=\sum_{\bsk\in\NN_0^s}\frac{1}{r_{s,\alpha}(\bsk)}\widehat{f}(\bsk) \widehat{g}(\bsk)\\
&=\sum_{\bsk\in\NN_0^s}\prod_{j=1}^{s}\left(\sum_{\tau=0}^{\alpha}\beta_{\tau}(k_j)\right)\widehat{f}(\bsk) \widehat{g}(\bsk)\\
&=\sum_{\bsk\in\NN_0^s}\sum_{\bstau\in\{0,\ldots,\alpha\}^s}\left(\prod_{j=1}^{s}\beta_{\tau_j}(k_j)\right)\widehat{f}(\bsk) \widehat{g}(\bsk)\\
&=\sum_{\bstau\in\{0,\ldots,\alpha\}^s}\sum_{\bsk\geq \bstau}\frac{\bsk!}{(\bsk-\bstau)!}\,\widehat{f}(\bsk) \widehat{g}(\bsk)\\
&=\sum_{\bstau\in\{0,\ldots,\alpha\}^s}\int_{\RR^s}\partial_{\bsx}^{\bstau}f(\bsx)\partial_{\bsx}^{\bstau}g(\bsx)\varphi_s(\bsx)\rd\bsx.
\end{align*}
Thus the inner product of $\cH_{s,\alpha}$ can also be written as
\begin{align*}
\langle f, g \rangle_{s,\alpha} =\sum_{\bstau\in\{0,\ldots,\alpha\}^s}\int_{\RR^s}\partial_{\bsx}^{\bstau}f(\bsx)\partial_{\bsx}^{\bstau}g(\bsx) \, \varphi_s(\bsx) \rd\bsx\,.
\end{align*}
In other words, for our special function $r_{s,\alpha}$ the corresponding Hermite space is a Sobolev-type space of functions on $\RR^s$ with smoothness $\alpha$.\

\begin{remark}
Hermite spaces have already been introduced in \cite{IL} with the stronger requirement of summability of the corresponding sequence. The authors there consider the sequence $\tilde r_{s,\alpha}(\bsk)=\bsk^{-\alpha}$,
which asymptotically is same as the choice of $r_{s,\alpha}$ in this paper. But due to the stronger (and unnecessary) requirement of summability in \cite{IL} there is the restriction of $\alpha>1$, which is relaxed to 
$\alpha\geq 1$ here. 

Besides the case of polynomially decaying coefficients, Hermite spaces {\it with exponentially decaying coefficients} were also considered. Multivariate integration for such Hermite spaces has been analyzed in
\cite{IKLP14}. It is also shown there that the elements of those function spaces are analytic. 
\\

The results in \cite{IL} and \cite{IKLP14} make heavy use of the facts that Hermite spaces are reproducing kernel Hilbert spaces with canonical kernel
\begin{align}\label{eq:Kalpha}
K_{s,\alpha}(\bsx,\bsy)=\sum_{\bsk\in\NN_0^s}r(\bsk) H_{\bsk}(\bsx) H_{\bsk}(\bsy)\qquad\text{for all } \bsx,\bsy\in\RR^s.
\end{align}
The eigenfunctions of the reproducing kernel are the Hermite polynomials and the eigenvalues are precisely the numbers $r(\bsk)$. It is a curious fact that we do not make direct use of this fact here. 
\end{remark}


\section{Integration}\label{sec_int}

We are interested in numerical approximation of the values of integrals 
\begin{align*}
I_s(f)=\int_{\RR^s}f(\bsx) \varphi_s(\bsx)\rd\bsx\qquad\text{for }\quad f\in\cH_{s,\alpha}.
\end{align*} 
Without loss of generality, see, e.g., \cite[Section~4.2]{NW08} or \cite{TWW88}, we can restrict ourselves to approximating $I_s(f)$ by means of {\it linear algorithms} of the form 
\begin{align}\label{eq:linAlg}
A_{N,s}(f)=\sum_{i=1}^N w_i f(\bsx_i) \qquad\text{for }\quad f\in\cH_{s,\alpha}
\end{align}
with integration nodes $\bsx_1,\ldots,\bsx_N \in \RR^s$ and weights $w_1,\ldots,w_N \in \RR$. An important subclass of linear algorithms are quasi-Monte Carlo algorithms which are obtained by choosing the weights $w_i=1/N$ for all $1 \le i \le N$.

For $f \in\cH_{s,\alpha}$ let 
\begin{align*}
\err(f):=I_s(f)-A_{N,s}(f).
\end{align*}
The {\it worst-case error} of the algorithm $A_{N,s}$ is then defined as the worst performance of $A_{N,s}$ over the unit ball of $\cH_{s,\alpha}$, i.e., 
\begin{align}\label{eq:wcedef}
e(A_{N,s},\cH_{s,\alpha})= \sup_{\substack{f \in \cH_{s,\alpha}\\\|f\|_{s,\alpha}\leq1}}\left|\err(f)\right|.
\end{align}
Moreover, we define the $N$-th minimal worst-case error,
\begin{align*}
e(N,\cH_{s,\alpha})=\inf_{A_{N,s}}e(A_{N,s},\cH_{s,\alpha})
\end{align*}
where the infimum is taken over all linear algorithms using $N$ function evaluations. 

Numerical integration in the Hermite space has already been studied in \cite{IL}. There it has been shown that for every $N \in \NN$ there exist points $\bsx_1,\ldots,\bsx_{N} \in \RR^s$ such that the worst-case error of the quasi-Monte Carlo (QMC) algorithm $Q_{N,s}(f)=\tfrac{1}{N}\sum_{i=1}^N f(\bsx_i)$ satisfies 
\begin{align*}
e(Q_{N,s},\cH_{s,\alpha}) \lesssim_{s,\alpha} \frac{1}{\sqrt{N}}.
\end{align*}
This result, which is \cite[Corollary~3.9]{IL}, has been shown by means of an averaging argument. The convergence rate however is very weak and does not depend on  the smoothness $\alpha$. Even very large smoothness does not give information about an improved convergence rate. The aim of this paper is to improve this error estimate.


\subsection{Lower bound on the worst-case error}\label{subsec:lbd}

First we prove a lower bound on the integration error, where we use techniques initiated by Bakhvalov \cite{B59}. 

\begin{theorem}\label{thm:lowerbound}
Let $s,\alpha\in\NN$. Then for all $N\in\NN$, $N\geq 2$, the $N$th minimal worst-case error for integration in the Hermite space $\cH(K_{s,\alpha})$ is bounded from below by
\begin{align*}
e(N,K_{s,\alpha})\gtrsim_{s,\alpha} \frac{(\log N)^{\frac{s-1}{2}}}{N^{\alpha}}.
\end{align*}
\end{theorem}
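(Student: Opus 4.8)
The plan is to follow Bakhvalov's classical approach for proving lower bounds on quadrature error, which proceeds by constructing a single hard function (a ``fooling function'') that is small in norm, integrates to a controlled positive value, and yet vanishes at all $N$ prescribed nodes. Since the worst-case error is a supremum over the unit ball, it suffices to exhibit for every configuration of nodes $\bsx_1,\ldots,\bsx_N$ and weights a function $f\in\cH_{s,\alpha}$ with $\|f\|_{s,\alpha}\le 1$, $f(\bsx_i)=0$ for $i=1,\ldots,N$, and $|I_s(f)|\gtrsim_{s,\alpha} (\log N)^{(s-1)/2}/N^{\alpha}$; then $|\err(f)|=|I_s(f)|$ gives the bound. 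To build such an $f$ I would start from a fixed smooth bump-like profile $\psi$ supported on a small box, rescale it to width of order $N^{-1/s}$ so that disjoint translates can be packed, and take a linear combination $f=\sum_j c_j \psi_j$ of $M\asymp N$ such bumps. The coefficients $c_j$ are then chosen by a linear-algebra (pigeonhole/dimension-counting) argument so that $f$ vanishes at the given nodes while $\int_{\RR^s} f\,\varphi_s$ stays bounded below.

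Concretely, I would first establish, on a single rescaled bump, the two scaling estimates that drive everything. If $\psi$ is a fixed function in $\cH_{s,\alpha}$ supported on a unit box and $\psi_{\delta}(\bsx)=\psi(\bsx/\delta)$ denotes the $\delta$-rescaling, then because the $\cH_{s,\alpha}$ norm controls $L_2$-norms of derivatives up to order $\alpha$ in each coordinate (the Sobolev-type characterization of the inner product derived just before Theorem~\ref{thm:lowerbound}), the highest-order term scales like $\delta^{-\alpha}$ per coordinate, giving $\|\psi_\delta\|_{s,\alpha}\lesssim \delta^{-s\alpha}\,\|\psi\|$-type growth once we also account for the Gaussian weight $\varphi_s$ on the box. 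Meanwhile the integral $\int \psi_\delta\,\varphi_s$ scales like $\delta^{s}$ times the integral of $\psi$. The key point of the $\log$-factor is that one does not use a single bump but splits $[0,1]^s$ (suitably transplanted into $\RR^s$ where $\varphi_s$ is bounded away from zero) into $\asymp N/( \log N)^{?}$ many dyadic boxes; the standard Bakhvalov-type counting that produces the power $(\log N)^{(s-1)/2}$ comes from balancing the anisotropic dyadic resolution levels $\bsl$ with $|\bsl|$ fixed, where the number of such multi-indices is of order $(\log N)^{s-1}$ and the square-root enters through the $\ell_2$ combination of coefficients in the norm.

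The cleanest way to extract the $(\log N)^{(s-1)/2}$ is therefore to organize the bumps by dyadic scale vectors $\bsl=(l_1,\ldots,l_s)$, use on each scale the hyperbolic-cross counting to bound the number of active boxes, choose the $c_j$ to kill the $N$ point-evaluation functionals (possible as soon as the number of free bumps exceeds $N$), and then normalize so that $\|f\|_{s,\alpha}=1$; the resulting lower bound on $|I_s(f)|$ is read off from the ratio of the integral-scaling to the norm-scaling, summed over the $\asymp(\log N)^{s-1}$ relevant scales and taking the square root from the $\ell_2$ normalization. I expect the main obstacle to be the interaction between the Gaussian weight $\varphi_s$ and the rescaling: unlike the uniform-measure case on $[0,1]^s$, here both the norm estimate and the integral estimate carry factors of $\varphi_s$ evaluated on the support of each bump, so the bumps must be confined to a fixed compact region (say a box of side $\asymp\sqrt{\log N}$, or simply a fixed box) on which $\varphi_s$ is pinched between two positive constants, and I must verify that restricting to such a region does not cost more than absolute constants in the final rate. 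The secondary technical point will be checking that the fixed profile $\psi$ genuinely lies in $\cH_{s,\alpha}$ and that its rescalings keep finite norm, which follows from the derivative characterization of the inner product but needs the explicit $\delta$-dependence tracked carefully to land exactly on the exponent $\alpha$ (and not $\alpha$ per coordinate, which would over-count).
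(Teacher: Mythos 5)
Your overall strategy is the same as the paper's: a Bakhvalov fooling function built from polynomial bumps on dyadic boxes inside a fixed compact region (the paper uses $[0,1]^s$, where $\varphi_s$ is bounded above by $1$ and below on the relevant arguments by $\varphi(1)^s$), organized over all anisotropic resolution vectors $\bsm$ with $\abs{\bsm}=t\asymp\log N$, of which there are $\binom{t+s-1}{s-1}\asymp(\log N)^{s-1}$; the rate then comes from the integral growing linearly in the number of scales while the norm grows only like its square root. One remark on mechanics: your linear-algebra step for annihilating the $N$ point functionals is unnecessary here. Since each scale contains $2^t\ge 2N$ boxes and each node lies in at most one open box per scale, one can simply discard every box whose support meets the node set and still retain more than half of them; this is what the paper does, and it keeps $h$ nonnegative, so the lower bound on $I_s(h)$ has no cancellations to worry about.

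The genuine gap is the square-root claim for the norm. The bumps belonging to different resolution vectors $\bsm\neq\bsm'$ have heavily overlapping (indeed nested) supports and are all nonnegative, so they are nowhere near orthogonal; the assertion that the norm squared of the sum is comparable to the sum of the individual norms squared is exactly the hard part of the proof and cannot be read off from an ``$\ell_2$ normalization.'' The paper spends most of its effort here: it first passes to an equivalent anchored Sobolev norm so that all cross terms involving derivatives of order $<\alpha$ vanish (because $g(x)=(x-x^2)^\alpha$ has zeros of order $\alpha$ at $0$ and $1$), then computes $\int_0^1 g^{(\alpha)}(z)g^{(\alpha)}(bz+a)\,\rd z=\frac{(\alpha!)^2}{2\alpha+1}\,b^{\alpha}$ exactly (Lemma~\ref{lem:inthelp}) to obtain the cross-term bound $2^{2\alpha\min(m_j,m_j')-\max(m_j,m_j')}$, and finally sums the geometric series $\sum\prod_j 2^{-\alpha\abs{m_j-m_j'}}\le 3^{s\alpha}$ to conclude $\|h\|^2\lesssim_{s,\alpha} 2^{2\alpha t}\binom{t+s-1}{s-1}$. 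Without this quasi-orthogonality estimate (or a substitute, e.g.\ genuinely orthogonalized bump systems) your argument only yields $\|h\|\lesssim 2^{\alpha t}(\log N)^{s-1}$ via the triangle inequality, which destroys the $(\log N)^{(s-1)/2}$ gain entirely. A secondary point: your single-bump scaling heuristic with isotropic width $\delta=N^{-1/s}$ gives only $N^{-\alpha-1/2}$; it is precisely the summation over all $\asymp 2^t$ disjoint boxes at each of the $(\log N)^{s-1}$ anisotropic scales that produces the stated rate, so that step needs to be carried out in full rather than per bump.
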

\begin{proof}
Let $\cP=\{\bsx_1, \bsx_2, \ldots, \bsx_N\}$ denote the set of quadrature points used in algorithm $A_{N,s}$. For $m \in\NN_0$ we define $\DD_m=\{1, 2,\ldots, 2^m\}$ and for vectors $\bsm=(m_1,\ldots,m_s)\in\NN_0^s$ we define $\DD_{\bsm}=\DD_{m_1}\times\DD_{m_2}\times\ldots\times\DD_{m_s}$. For $i\in\DD_m$ let
\begin{align*}
h_{i,m}(x) = \begin{cases} \left(2^m x - (i-1) \right)^{\alpha} \left(i - 2^m x \right)^{\alpha} & \mbox{for } \frac{i-1}{2^m}<x<\frac{i}{2^m}, \\ 0 & \mbox{otherwise}.\end{cases}
\end{align*}
For vectors $\bsm \in \NN_0^s$, $\bsi \in \DD_{\bsm}$ and $\bsx=(x_1,\ldots,x_s)\in\RR^s$ we define
\begin{align*}
h_{\bsi, \bsm}(\bsx) = \prod_{j=1}^s h_{i_j, m_j}(x_j).
\end{align*}
Let $\supp(h_{\bsi,\bsm})$ denote the support of the function $h_{\bsi,\bsm}$ and note that $$\supp(h_{\bsi,\bsm}) \subset \prod_{j=1}^s \left(\frac{i_j-1}{2^{m_j}},\frac{i_j}{2^{m_j}}\right)\subset[0,1]^s$$ for all $\bsi\in\DD_{\bsm},\bsm\in\NN_0^s$. Let $t\in\NN$ be such that $2^{t-1} \le 2N < 2^t$. Define
\begin{align*}
h(\bsx)= \sum_{\substack{\bsm\in\NN_0^s\\ \abs{\bsm}=t}} \sum_{\substack{\bsi\in\DD_{\bsm}\\\cP\cap\supp(h_{\bsi, \bsm})=\emptyset}} h_{\bsi,\bsm}(\bsx).
\end{align*}
By definition we have $h(\bsx_n) = 0$ for all $n \in \{1,2,\ldots,N\}$ and hence also $A_{N,s}(h)=0$. 

Moreover, 
\begin{align*}
\int_{-\infty}^\infty h_{i,m}(x) \varphi(x) \rd x &= \int_{\frac{i-1}{2^m}}^{\frac{i}{2^m}} h_{i,m}(x) \varphi(x) \rd x\\
&=\int_{\frac{i-1}{2^m}}^{\frac{i}{2^m}} \left(2^m x - (i-1) \right)^\alpha \left( i - 2^m x \right)^\alpha \varphi(x)\rd x\\
&=\frac{1}{2^{m}}\int_0^1 z^\alpha (1-z)^\alpha \varphi\left(\frac{z+i-1}{2^m}\right)\rd z\\
&\geq\frac{1}{2^{m}}\int_0^1 z^\alpha (1-z)^\alpha \varphi(1)\rd z\\
&\geq\frac{1}{2^{m}}\frac{(\alpha!)^2}{(2\alpha+1)!}\frac{1}{\sqrt{2\pi\ee}},
\end{align*}
where we used the value $\beta(\alpha+1,\alpha+1)$ of the $\beta$ function. Thus we have
\begin{align}\label{inth:est}
\int_{\RR^s}h(\bsx)\varphi_s(\bsx)\rd\bsx &=\sum_{\substack{\bsm\in\NN_0^s\\\abs{\bsm}=t}}\sum_{\substack{\bsi\in\DD_{\bsm}\\\cP\cap\supp(h_{\bsi, \bsm})=\emptyset}} \prod_{j=1}^s \int_{-\infty}^\infty h_{i_j, m_j}(x_j) \varphi(x_j) \,\rd x_j \nonumber\\ 
&\geq\sum_{\substack{\bsm\in\NN_0^s\\\abs{\bsm}=t}}\sum_{\substack{\bsi\in\DD_{\bsm}\\\cP\cap\supp(h_{\bsi, \bsm})=\emptyset}}  \frac{1}{2^{t}} \frac{(\alpha!)^{2s}}{((2\alpha+1)!)^s} \frac{1}{(2\pi\,\ee)^{s/2}} \nonumber \\
&\ge \binom{t+s-1}{s-1} \frac{2^t-N}{2^{t }} \frac{(\alpha!)^{2s}}{((2\alpha+1)!)^s}\frac{1}{(2\pi\,\ee)^{s/2}} \nonumber\\ 
&> \binom{t+s-1}{s-1} \frac{1}{4} \frac{(\alpha!)^{2s}}{((2\alpha+1)!)^s} \frac{1}{(2\pi\,\ee)^{s/2}},
\end{align}
where we also used that $\cP\cap\supp(h_{\bsi, \bsm})$ is empty for at least $2^t-N$ many indices $\bsi\in\DD_{\bsm}$.

It remains to estimate the norm of the function $h$ from above. We have that
\begin{align*}
\|h\|^2_{K_{s,\alpha}}&=\sum_{\bstau \in\{0,\ldots,\alpha\}^s}\int_{\RR^s}\left(\partial^{\bstau}h(\bsx)\right)^2\varphi_s(\bsx)\rd\bsx\\
&\leq \sum_{\bstau \in\{0,\ldots,\alpha\}^s}\int_{[0,1]^s}\left(\partial^{\bstau}h(\bsx)\right)^2\rd\bsx,
\end{align*}
where we used that $\norm{\varphi_s}_{\infty}\leq 1$ and $\supp(h)\subseteq[0,1]^s$. Next we use the equivalence of the unanchored and the anchored Sobolev norm, see e.g.\, Example 2.1 in \cite{GHHR16},
\begin{align*}
\|h\|^2_{K_{s,\alpha}}&\lesssim_{s,\alpha}\sum_{\uu\subseteq[s]}\sum_{\bstau_{-\uu}\in\{1,\ldots,\alpha-1\}^{s-\abs{\uu}}}\int_{[0,1]^{\abs{\uu}}}\left(\partial^{(\bsalpha_{\uu},\bstau_{-\uu})}h(\bsx_{\uu},\bszero_{-\uu})\right)^2\rd\bsx_{\uu}\\
&\lesssim_{s,\alpha}\sum_{\uu\subseteq[s]}\sum_{\bstau_{-\uu}\in\{1,\ldots,\alpha-1\}^{s-\abs{\uu}}}\sum_{\substack{\bsm,\bsm'\in\NN_0^s\\ \abs{\bsm}=t \\ \abs{\bsm'}=t}}\sum_{\substack{\bsi\in\DD_{\bsm},\bsi'\in\DD_{\bsm'}\\ \cP\cap\supp(h_{\bsi, \bsm})=\emptyset\\ \cP\cap\supp(h_{\bsi', \bsm'})=\emptyset}}\\
&\qquad\times  \int_{[0,1]^{\abs{\uu}}}\partial^{(\bsalpha_{\uu},\bstau_{-\uu})}h_{\bsi,\bsm}(\bsx_{\uu},\bszero_{-\uu})\partial^{(\bsalpha_{\uu},\bstau_{-\uu})}h_{\bsi',\bsm'}(\bsx_{\uu},\bszero_{-\uu})\rd\bsx_{\uu},
\end{align*}
where $(\bsx_{\uu},\bszero_{-\uu})$ denotes the $s$-dimensional vector for which the $j$-th component is $x_j$ if $j\in\uu$ and $0$ if $j\notin\uu$ as well as $\partial^{(\bsalpha_{\uu},\bstau_{-\uu})}$ is the differential operator which derives $\alpha$-times with respect to the $j$th component if $j\in\uu$ and $\tau_j$-times with respect to the $j$th component if $j\notin\uu$ with $\bstau_{-\uu}=(\tau_j)_{j\notin\uu}$. Using the product form of the functions $h_{\bsi,\bsm}$ we get 
\begin{align}\label{eq:normest1}
\|h\|^2_{K_{s,\alpha}}&\lesssim_{s,\alpha}\sum_{\uu\subseteq[s]}\sum_{\bstau_{-\uu}\in\{1,\ldots,\alpha-1\}^{s-\abs{\uu}}}\sum_{\substack{\bsm,\bsm'\in\NN_0^s\\ \abs{\bsm}=t \\ \abs{\bsm'}=t}}\sum_{\substack{\bsi\in\DD_{\bsm},\bsi'\in\DD_{\bsm'}\\ \cP\cap\supp(h_{\bsi, \bsm})=\emptyset\\ \cP\cap\supp(h_{\bsi', \bsm'})=\emptyset}}\nonumber\\
&\qquad\times \prod_{j\in\uu}\int_{0}^{1}h_{i_j,m_j}^{(\alpha)}(x)h_{i_j',m_j'}^{(\alpha)}(x)\rd x \prod_{j\notin\uu}h_{i_j,m_j}^{(\tau_j)}(0)h_{i_j',m_j'}^{(\tau_j)}(0).
\end{align}

Now let $g:\RR\rightarrow\RR$ be defined by
\begin{align*}
g(x)=\begin{cases} (x-x^2)^{\alpha} & \text{for } x\in [0,1] \\ 0 & \text{otherwise.}\end{cases}
\end{align*}
For $i\in\DD_m$ and $m\in\NN$ we have that
\begin{align*}
 h_{i,m}(x)=g(2^mx-(i-1)).
\end{align*}
and for $\tau\in\{0,\ldots,\alpha-1\}$ it holds that
\begin{align*}
 h^{(\tau)}_{i,m}(0)=2^{\tau m}g^{(\tau)}(1-i)=0.
\end{align*}

Therefore, in \eqref{eq:normest1} only the term for $\uu=[s]$ is non-zero, i.e.\ we end up with
\begin{align}\label{eq:normest2}
\|h\|^2_{K_{s,\alpha}}&\lesssim_{s,\alpha}\sum_{\substack{\bsm,\bsm'\in\NN_0^s\\ \abs{\bsm}=t \abs{\bsm'}=t}}\sum_{\substack{\bsi\in\DD_{\bsm},\bsi'\in\DD_{\bsm'}\\ \cP\cap\supp(h_{\bsi, \bsm})=\emptyset\\ \cP\cap\supp(h_{\bsi', \bsm'})=\emptyset}}\prod_{j=1}^{s}\int_{0}^{1}h_{i_j,m_j}^{(\alpha)}(x)h_{i_j',m_j'}^{(\alpha)}(x)\rd x.
\end{align}

Now let $m'\leq m$ and let $i, i'$ be such that $\supp(h_{i,m}) \subseteq \supp(h_{i',m'})$, i.e.\ $2^{m-m'}(i'-1)<i\leq2^{m-m'}i'$. Then
\begin{align*}
\int_{0}^{1}h^{(\alpha)}_{i,m}(x) h^{(\alpha)}_{i',m'}(x)\rd x&=2^{\alpha(m+m')}\int_{\frac{i-1}{2^m}}^{\frac{i}{2^m}}g^{(\alpha)}(2^m x-(i-1)) g^{(\alpha)}(2^{m'} x-(i'-1)) \rd x\\
&= 2^{\alpha(m+m')-m}\int_{0}^{1}g^{(\alpha)}(z) g^{(\alpha)}\left(\frac{z+i-1}{2^{m-m'}}-(i'-1)\right)\rd z
\end{align*}
and with Lemma \ref{lem:inthelp} by setting $b=2^{m'-m}$ and $a=2^{m'-m}(i-1)-(i'-1)$, which is given in the appendix, we get
\begin{align*}
\int_{0}^{1}g^{(\alpha)}(z) g^{(\alpha)}\left(\frac{z+i-1}{2^{m-m'}}-(i'-1)\right)\rd z =\frac{(\alpha!)^2}{2\alpha+1}\, 2^{\alpha(m'-m)}.
\end{align*}
So for $m'\leq m$ it follows that the integrals in \eqref{eq:normest2} can be bounded from above by
\begin{align*}
\int_{0}^{1}h^{(\alpha)}_{i,m}(x) h^{(\alpha)}_{i',m'}(x)\rd x\lesssim_{s,\alpha}\begin{cases} 2^{2\alpha m'-m}&\text{if } \supp(h_{i,m})\subseteq \supp(h_{i',m'})\\ 0 &\text{otherwise} \end{cases}
\end{align*}

Note that for given $i'$, there are $2^{m-m'}$ values of $i \in \mathbb{D}_m$ such that $\supp(h_{i,m})\subseteq \supp(h_{i',m'})$. Thus we have
\begin{align*}
\|h\|^2_{K_{s,\alpha}}&\lesssim_{s,\alpha} \sum_{\substack{\bsm,\bsm'\in\NN_0^s\\ \abs{\bsm}=t \\ \abs{\bsm'}=t}}\sum_{\substack{\bsi\in\DD_{\bsm},\bsi'\in\DD_{\bsm'}\\\cP\cap\supp(h_{\bsi, \bsm})=\emptyset \\ \cP\cap\supp(h_{\bsi', \bsm'})=\emptyset}}\prod_{j=1}^{s}2^{2\alpha\min(m_j,m'_j)-\max(m_j,m'_j)}\\
&\lesssim_{s,\alpha} \sum_{\substack{\bsm,\bsm'\in\NN_0^s\\ \abs{\bsm}=t \\ \abs{\bsm'}=t}}\prod_{j=1}^{s}2^{2\alpha\min(m_j,m'_j)-\max(m_j,m'_j)}2^{\min(m_j,m'_j)}2^{\max(m_j,m'_j)-\min(m_j,m'_j)}\\
&=~~ \sum_{\substack{\bsm,\bsm'\in\NN_0^s\\ \abs{\bsm}=t \\ \abs{\bsm'}=t}}\prod_{j=1}^{s}2^{\alpha\left(m_j+m_j'-\abs{m_j-m_j'}\right)}\\
&\lesssim_{s,\alpha} 2^{2\alpha t}\sum_{\substack{\bsm\in\NN_0^s\\ \abs{\bsm}=t}}\sum_{\substack{\bsm'\in\NN_0^s\\ \abs{\bsm'}=t}}\prod_{j=1}^{s}2^{-\alpha\abs{m_j-m'_j}}.
\end{align*}
For any $\bsm\in\NN_0^s$ we have
\begin{align*}
\sum_{\substack{\bsm'\in\NN_0^s\\ \abs{\bsm'}=t}}\prod_{j=1}^{s}2^{-\alpha\abs{m_j-m'_j}}\leq \sum_{\substack{\bsk\in\ZZ^s\\k_1+\cdots+k_s=0}}2^{\alpha(-\abs{k_1}-\cdots-\abs{k_s})}\leq \left(\sum_{k\in\ZZ}2^{-\alpha\abs{k}}\right)^s= 3^{s\alpha}.
\end{align*}
Thus, we end up with
\begin{align}\label{normh:est2}
\|h\|^2_{K_{s,\alpha}}&\lesssim_{s,\alpha}2^{2\alpha t}\sum_{\substack{\bsm\in\NN_0^s\\ \abs{\bsm}=t}} 1 \lesssim_{s,\alpha} 2^{2 \alpha t}\binom{t+s-1}{s-1}.
\end{align}

Combining \eqref{inth:est}, the fact that $A_{N,s}(h)=0$ and \eqref{normh:est2} we finally obtain
\begin{align*}
e(A_{N,s},K_{s,\alpha})\ge \frac{|I_s(h)-A_{N,s}(h)|}{\norm{h}_{K_{s,\alpha}}}\gtrsim_{s,\alpha}  \frac{1}{2^{\alpha t}}\sqrt{\binom{t+s-1}{s-1}}\gtrsim_{s,\alpha} \frac{(\log N)^{\frac{s-1}{2}}}{N^{\alpha}}.
\end{align*}
\end{proof}

\subsection{A relation to integration in the ANOVA space}\label{sec:ANOVA}

\begin{definition}\rm
The {\it ANOVA space of smoothness $\alpha$} defined over $[0,1)^s$ (also known as {\it unanchored Sobolev space}) is given by
\begin{multline}
\cH^{\sob}_{s,\alpha}([0,1)^s):=\bigotimes_{j=1}^{s}\left\{g:[0,1)\to\RR\,:\, g^{(r)} \text{ absolutely continuous }\right.\\
\left.\text{for }r\in\{0\ldots,\alpha-1\}, g^{(\alpha)}\in L^2[0,1)\right\}
\end{multline}
with inner product
\begin{align*}
\langle g,h\rangle_{\sob,s,\alpha}&:=\sum_{\uu\subseteq\{1,\ldots,s\}}\sum_{\bstau_{\uu}\in\{0,\ldots,\alpha-1\}^{\abs{\uu}}}\\
&\qquad\times \int_{[0,1]^{s-\abs{\uu}}}\left(\int_{[0,1]^{\abs{\uu}}}\partial_{\bsz}^{(\bstau_{\uu},\alpha_{-\uu})}g(\bsz)\rd\bsz_{\uu}\right)\left(\int_{[0,1]^{\abs{\uu}}}\partial_{\bsz}^{(\bstau_{\uu},\alpha_{-\uu})}h(\bsz)\rd \bsz_{\uu}\right)\rd\bsz_{-\uu}
\end{align*}
where $\bsz_{\uu}$ denotes the $|\uu|$-dimensional vector with components $z_j$ for $j\in\uu$ and $\bsz_{-\uu}$ denotes the $(s-|\uu|)$-dimensional vector with the components $z_j$ for $j\notin\uu$. Moreover, $(\bstau_{\uu},\alpha_{-\uu})$ denotes the $s$-dimensional vector for which the $j$-th component is $\alpha$ for $j\notin\uu$ and $\tau_j$ for $j\in\uu$, where $\bstau_{\uu}=(\tau_j)_{j\in\uu}$. The norm is $\norm{\cdot}_{\sob,s,\alpha}=\sqrt{\langle\cdot,\cdot\rangle_{\sob,s,\alpha}}$.
\end{definition}

For short we write $\cH^{\sob}_{s,\alpha}:=\cH^{\sob}_{s,\alpha}([0,1)^s)$. Note that $\cH^{\sob}_{s,\alpha}$ consists of functions with domain $[0,1)^s$ instead of $\RR^s$. Also the ANOVA space of smoothness $\alpha$ is a reproducing kernel Hilbert space with kernel function $$K_{s,\alpha}^{\sob}(\bsx,\bsy)=\prod_{j=1}^s K_{\alpha}^{\sob}(x_j,y_j)$$ for $\bsx=(x_1,x_2,\ldots,x_s) \in [0,1)^s$ and similarly for $\bsy$, and where the one-dimensional kernel is given by $$K_{\alpha}^{\sob}(x,y)=\sum_{r=0}^{\alpha} \frac{B_r(x) B_r(y)}{(r!)^2}+(-1)^{\alpha+1} \frac{B_{2 \alpha}(|x-y|)}{(2 \alpha)!}$$ for $x,y \in [0,1)$, where $B_r$ denotes the Bernoulli polynomial of degree $r$.  

The worst-case absolute integration error of an algorithm $A_{N,s}$ as in \eqref{eq:linAlg} is 
\begin{align*}
e(A_{N,s},\cH_{s,\alpha}^{\sob}) = \sup_{\substack{g \in \cH_{s,\alpha}^{\sob}\\ \|g\|_{\sob,s,\alpha} \le 1}} \left|\int_{[0,1]^s} g(\bsx) \rd \bsx -A_{N,s}(g)\right|. 
\end{align*} 
Now we relate the integration problem in the Hermite space $\cH_{s,\alpha}$ to the integration problem in $\cH_{s,\alpha}^{\sob}$.

Let $Q_{N,s}$ be a QMC-rule for integration in the ANOVA space $\cH_{s,\alpha}^{\sob}$ which is based on a point set $\{\bsz_1,\bsz_2,\ldots,\bsz_N\}$ in $[0,1)^s$, i.e., 
\begin{align*}
Q_{N,s}(g)=\frac{1}{N}\sum_{n=1}^N g(\bsz_i)\ \ \ \mbox{ for } g \in \cH_{s,\alpha}^{\sob}.
\end{align*}

For any $\bsb=(b,\ldots,b)\in (0,\infty)^s$ we denote by $\cB_b$ the mapping from $[0,1]^s$ to $[-\bsb,\bsb]$ given by 
\begin{align}\label{eq:map}
\cB_b(\bsz) = 2b\bsz-\bsb.
\end{align}
Note that the mapping $\cB_b$ is just a scaling and translation of the $s$-dimensional unit cube which is fully determined by the parameter $b$. The volume of the $s$-dimensional interval $[-\bsb,\bsb]$ is then $(2b)^s$.

For integration in the Hermite space $\cH_{s,\alpha}$ we consider integration rules of the following form: let $\{\bsz_1,\bsz_2,\ldots,\bsz_N\}\subseteq[0,1)^s$ be the point set used in $Q_{N,s}$. Then we use the integration rule
\begin{align}\label{eq:intrule}
A_{N,s}(f)=\frac{(2b)^s}{N}\sum_{i=1}^{N}f(\cB_b(\bsz_i))\varphi_s(\cB_b(\bsz_i)) \ \ \ \mbox{ for } f \in \cH_{s,\alpha},
\end{align}
with $b=2\sqrt{\alpha\log N}$ for all $i\in \{1,2,\ldots,N\}$.

\begin{theorem}\label{thm:upperbound}
Let $\alpha\in\NN$ and $A_{N,s}$ be the quadrature rule defined in \eqref{eq:intrule}. Then for the worst-case error of $A_{N,s}$ in the Hermite space $\cH_{s,\alpha}$ we have
\begin{align*}
e(A_{N,s},\cH_{s,\alpha})\lesssim_{s,\alpha} (\log N)^{s \frac{2\alpha+1}{4}} e(Q_{N,s},\cH_{s,\alpha}^{\sob}) +\frac{1}{N^{\alpha}}.
\end{align*}
\end{theorem}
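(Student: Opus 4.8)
The plan is to split the Gaussian integral over $\RR^s$ into the contribution of the cube $[-\bsb,\bsb]$ plus a tail, and to recognise the cube part, after the substitution $\bsx=\cB_b(\bsz)$, as $(2b)^s$ times a unit-cube integral of the pulled-back integrand. Concretely, set $g(\bsz):=f(\cB_b(\bsz))\varphi_s(\cB_b(\bsz))$. Since $\rd\bsx=(2b)^s\rd\bsz$ and $\cB_b$ maps $[0,1]^s$ onto $[-\bsb,\bsb]$, we have $\int_{[-\bsb,\bsb]}f\varphi_s\rd\bsx=(2b)^s\int_{[0,1]^s}g(\bsz)\rd\bsz$, while $A_{N,s}(f)=(2b)^sQ_{N,s}(g)$ by \eqref{eq:intrule}. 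Hence
\begin{align*}
\err(f)=(2b)^s\left(\int_{[0,1]^s}g(\bsz)\rd\bsz-Q_{N,s}(g)\right)+\int_{\RR^s\setminus[-\bsb,\bsb]}f(\bsx)\varphi_s(\bsx)\rd\bsx,
\end{align*}
and by the definition of the worst-case error the first summand is at most $(2b)^s\,e(Q_{N,s},\cH^{\sob}_{s,\alpha})\,\norm{g}_{\sob,s,\alpha}$. It then remains to (i) bound the tail by $N^{-\alpha}$, (ii) verify $g\in\cH^{\sob}_{s,\alpha}$ and bound $\norm{g}_{\sob,s,\alpha}$, and (iii) combine.

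For the tail I would use a uniform pointwise bound on $f$. The Cauchy--Schwarz computation carried out right after Definition~\ref{def:hermite} gives $\abs{f(\bsx)}\sqrt{\varphi_s(\bsx)}\le(\sum_{\bsk}r_{s,\alpha}(\bsk)\sigma_s(\bsk)^2)^{1/2}\norm{f}_{s,\alpha}\lesssim_{s,\alpha}\norm{f}_{s,\alpha}$, so that $\abs{f}\varphi_s\lesssim_{s,\alpha}\norm{f}_{s,\alpha}\sqrt{\varphi_s}$. As $\sqrt{\varphi_s}$ factorises over coordinates and is integrable, a union bound over the $s$ events $\{\abs{x_j}>b\}$ reduces the tail of $\int\sqrt{\varphi_s}$ to the one-dimensional Gaussian tail $\int_b^\infty\ee^{-x^2/4}\rd x\le\frac{2}{b}\ee^{-b^2/4}$. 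With $b=2\sqrt{\alpha\log N}$ we get $\ee^{-b^2/4}=N^{-\alpha}$, whence the tail is $\lesssim_{s,\alpha}\norm{f}_{s,\alpha}\,N^{-\alpha}$.

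The heart of the argument is the norm bound, and this is where I expect the main difficulty. Writing $\Phi:=f\varphi_s$, so $g=\Phi\circ\cB_b$, the chain rule yields $\partial_{\bsz}^{\bstau}g(\bsz)=(2b)^{\abs{\bstau}}(\partial_{\bsx}^{\bstau}\Phi)(\cB_b(\bsz))$. For $\uu\subseteq[s]$ and $\bstau_\uu\in\{0,\ldots,\alpha-1\}^{\abs{\uu}}$ put $\bstau^\ast=(\bstau_\uu,\alpha_{-\uu})$; bounding the inner integral of the ANOVA norm by Cauchy--Schwarz and substituting back $\bsx=\cB_b(\bsz)$ (Jacobian $(2b)^{-s}$) bounds the corresponding term by $(2b)^{2\abs{\bstau^\ast}-s}\int_{\RR^s}(\partial_{\bsx}^{\bstau^\ast}\Phi)^2\rd\bsx$. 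To control this integral I would expand $\partial_{\bsx}^{\bstau^\ast}(f\varphi_s)$ by the Leibniz rule and use that each derivative of the Gaussian density is a Hermite polynomial times $\varphi$: indeed $\partial^{\bstau^\ast-\bsj}\varphi_s=\prod_{l}(-1)^{\tau^\ast_l-j_l}\sqrt{(\tau^\ast_l-j_l)!}\,H_{\tau^\ast_l-j_l}(x_l)\varphi(x_l)$, and the weak Cramér bound \eqref{eq:cramer} then gives $\abs{\partial^{\bstau^\ast-\bsj}\varphi_s}\lesssim_{s,\alpha}\sqrt{\varphi_s}$. Consequently $\int_{\RR^s}(\partial_{\bsx}^{\bstau^\ast}\Phi)^2\rd\bsx\lesssim_{s,\alpha}\sum_{\bsj\le\bstau^\ast}\int_{\RR^s}(\partial^{\bsj}f)^2\varphi_s\rd\bsx\le(\alpha+1)^s\norm{f}_{s,\alpha}^2$, since each $\bsj\le\bstau^\ast\le\bsalpha$ contributes exactly one of the terms defining $\norm{f}_{s,\alpha}^2$. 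The same computation shows the relevant derivatives of $g$ lie in $L^2([0,1]^s)$, so $g\in\cH^{\sob}_{s,\alpha}$.

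Finally I would sum over the finitely many pairs $(\uu,\bstau_\uu)$. Because $\abs{\bstau^\ast}=\abs{\bstau_\uu}+\alpha(s-\abs{\uu})\le\alpha s-\abs{\uu}$, and $2b\ge1$ for $N\ge2$, the largest power of $2b$ occurs at $\uu=\emptyset$, giving $\norm{g}_{\sob,s,\alpha}^2\lesssim_{s,\alpha}(2b)^{2\alpha s-s}\norm{f}_{s,\alpha}^2$, i.e.\ $\norm{g}_{\sob,s,\alpha}\lesssim_{s,\alpha}(2b)^{\alpha s-s/2}\norm{f}_{s,\alpha}$. Hence $(2b)^s\norm{g}_{\sob,s,\alpha}\lesssim_{s,\alpha}(2b)^{\alpha s+s/2}=(2b)^{s(2\alpha+1)/2}$, and inserting $2b=4\sqrt{\alpha\log N}$ turns this into $\lesssim_{s,\alpha}(\log N)^{s(2\alpha+1)/4}$. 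Combining with the tail estimate and taking the supremum over $\norm{f}_{s,\alpha}\le1$ yields the claim. The main obstacle is the careful bookkeeping of the powers of $2b$ in the norm estimate: the gain of $(2b)^{-s}$ from the Jacobian is precisely what lowers the exponent from the naive $s(2\alpha+2)/4$ to the sharp $s(2\alpha+1)/4$.
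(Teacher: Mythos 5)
Your proposal is correct and follows essentially the same route as the paper: the same split into tail and cube errors, the same pointwise Cramér/Cauchy--Schwarz bound for the tail (the paper's Lemma~\ref{lem:boundedness}), and the same transference of the ANOVA norm via the Leibniz rule, the identity $\partial^k\varphi=(-1)^k\sqrt{k!}\,H_k\varphi$, and Cramér's bound (the paper's Lemmas~\ref{lem:prodrule} and~\ref{lem:spacerelation}), with the same bookkeeping of the powers of $b$ yielding the exponent $s(2\alpha+1)/4$. The only cosmetic difference is that you keep the $(\uu,\bstau_\uu)$ indexing and absorb the combinatorial factors into $\lesssim_{s,\alpha}$, whereas the paper reindexes over $\bstau\in\{0,\ldots,\alpha\}^s$ and tracks them as $\tau!\,2^{2\tau}$.
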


For the proof of Theorem~\ref{thm:upperbound} we need some tools that will be provided in the next subsection. The proof will then be given in Subsection~\ref{sec:proof_thm2}.

In Subsection~\ref{sec:construct} we provide a construction of point sets with low worst-case error $e(Q_{N,s},\cH_{s,\alpha}^{\sob})$.

\subsubsection{Auxiliary results}

\begin{lem}\label{lem:boundedness}
Let $f\in\cH_{s,\alpha}$ with $\alpha\in\NN$. Then $|f(\bsx)\sqrt{\varphi_s(\bsx)}|\lesssim_{s,\alpha} \norm{f}_{s,\alpha}$ for all $\bsx\in\RR^s$.
\end{lem}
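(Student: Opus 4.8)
The plan is to expand $f$ in its Hermite series and estimate pointwise via Cauchy--Schwarz, reusing almost verbatim the computation already carried out in Section~\ref{sec:fct_space} to show that $\cH_r$ is closed, now specialized to the weight $r=r_{s,\alpha}$.

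First I would write the Hermite expansion $f=\sum_{\bsk\in\NN_0^s}\widehat{f}(\bsk)H_{\bsk}$, multiply by $\sqrt{\varphi_s(\bsx)}$, and apply the triangle inequality to obtain
\[
|f(\bsx)\sqrt{\varphi_s(\bsx)}|\le\sum_{\bsk\in\NN_0^s}|\widehat{f}(\bsk)|\,|H_{\bsk}(\bsx)\sqrt{\varphi_s(\bsx)}|.
\]
Next I would split each summand as $(|\widehat{f}(\bsk)|/\sqrt{r_{s,\alpha}(\bsk)})\cdot(\sqrt{r_{s,\alpha}(\bsk)}\,|H_{\bsk}(\bsx)\sqrt{\varphi_s(\bsx)}|)$ and apply Cauchy--Schwarz. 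The sum of the squared first factors is precisely $\norm{f}_{s,\alpha}^2$, and for the second factors I would invoke $|H_{\bsk}(\bsx)\sqrt{\varphi_s(\bsx)}|\le\sigma_s(\bsk)$ from \eqref{defsigma}, giving
\[
|f(\bsx)\sqrt{\varphi_s(\bsx)}|\le\Big(\sum_{\bsk\in\NN_0^s}r_{s,\alpha}(\bsk)\,\sigma_s(\bsk)^2\Big)^{1/2}\norm{f}_{s,\alpha}.
\]

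It remains to check that the constant $C_{s,\alpha}:=\big(\sum_{\bsk}r_{s,\alpha}(\bsk)\sigma_s(\bsk)^2\big)^{1/2}$ is finite; since it depends only on $s$ and $\alpha$, this finishes the proof with implied constant $C_{s,\alpha}$. This finiteness is exactly the summability condition already recorded in Section~\ref{sec:fct_space}. The one delicate point — and the reason the sharper Lemma~\ref{lem:hermitebound} is convenient rather than the crude Cramer bound \eqref{eq:cramer} — is the case $\alpha=1$: using only $\sigma_s(\bsk)\le1$ one is reduced to $\sum_{\bsk}r_{s,\alpha}(\bsk)\asymp\prod_j\big(\sum_k k^{-\alpha}\big)$, whose one-dimensional factors diverge for $\alpha=1$. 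By contrast, the bound $\sigma_s(\bsk)^2\le\prod_j\min(1,\pi/k_j^{1/6})$ makes the sum factorize into one-dimensional series whose general term is of order $k^{-\alpha-1/6}$, hence convergent for every $\alpha\ge1$. Apart from this observation the argument is entirely routine, so I do not expect any serious obstacle.
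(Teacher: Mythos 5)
Your proposal is correct and follows essentially the same route as the paper: expand $f$ in its Hermite series, apply Cauchy--Schwarz with the splitting $r_{s,\alpha}(\bsk)^{-1/2}\cdot r_{s,\alpha}(\bsk)^{1/2}$, and control the resulting constant $\bigl(\sum_{\bsk}r_{s,\alpha}(\bsk)\sigma_s(\bsk)^2\bigr)^{1/2}$ via Lemma~\ref{lem:hermitebound}; the paper merely makes the final constant explicit by bounding $1+\sum_{k\ge1}k^{-1/6}r_\alpha(k)\le \ee+6/\alpha^{1/6}$. Your remark that the crude Cramer bound \eqref{eq:cramer} would fail for $\alpha=1$ while the $k^{-1/6}$ gain from Lemma~\ref{lem:hermitebound} rescues convergence is exactly the point the paper's argument relies on.
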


\begin{proof}
For any $f\in\cH_{s,\alpha}$ we know that $f(\bsx)=\sum_{\bsk\in\NN_0^s} \widehat{f}(\bsk)H_{\bsk}(\bsx)$ for all $\bsx\in\RR^s$.
Using the Cauchy-Schwarz inequality and Lemma \ref{lem:hermitebound},
\begin{align*}
|f(\bsx)\sqrt{\varphi_s(\bsx)}|&\leq \sum_{\bsk\in\NN_0^s}|\widehat{f}(\bsk)| \ |H_{\bsk}(\bsx)\sqrt{\varphi_s(\bsx)}| r_{s,\alpha}(\bsk)^{-1/2}\,r_{s,\alpha}(\bsk)^{1/2}\\
&\leq \left(\sum_{\bsk\in\NN_0^s}\frac{1}{r_{s,\alpha}(\bsk)}|\widehat{f}(\bsk)|^2\right)^{1/2}  \left(\sum_{\bsk\in\NN_0^s}|H_{\bsk}(\bsx)\sqrt{\varphi_s(\bsx)}|^2 r_{s,\alpha}(\bsk)\right)^{1/2}\\
&=\norm{f}_{s,\alpha} \pi^{s/2} \left(1+\sum_{k=1}^{\infty}\frac{1}{k^{1/6}}\, r_{\alpha}(k)\right)^{s/2}.
\end{align*}
We have 
\begin{align*}
1+\sum_{k=1}^{\infty}\frac{1}{k^{1/6}}\, r_{\alpha}(k) & \le 1 + \sum_{k=1}^{\alpha} \frac{1}{k^{1/6}} \frac{1}{k!} + \sum_{k=\alpha+1}^{\infty} \frac{1}{k^{1/6}} \frac{(k-\alpha)!}{k!}\\
& \le 1 +{\rm e}-1 + \sum_{k=\alpha+1}^{\infty} \frac{1}{k^{7/6}}\\
& \le {\rm e} +\int_{\alpha}^{\infty} \frac{\rd t}{t^{7/6}} = {\rm e} + \frac{6}{\alpha^{1/6}} 
\end{align*}
and hence the desired result follows.
\end{proof}

\begin{lem}\label{lem:prodrule}
Let $f\in\cH_{s,\alpha}$. For any $\bstau\in\{0,\ldots,\alpha\}^s$ we have 
\begin{align}\label{eq:prodrule}
\partial_{\bsx}^{\bstau}\left(f\cdot\varphi_s\right)(\bsx) = \varphi_s(\bsx)\sum_{\bsj\leq\bstau}(-1)^{\abs{\bstau-\bsj}}\binom{\bstau}{\bstau-\bsj}\sqrt{(\bstau-\bsj)!}\,H_{\bstau-\bsj}(\bsx)\,\partial_{\bsx}^{\bsj}f(\bsx).
\end{align}
\end{lem}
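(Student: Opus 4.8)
The plan is to prove the product rule \eqref{eq:prodrule} by induction on the order of differentiation, reducing the multivariate case to the one-dimensional case and exploiting the product structure of both $\varphi_s$ and $H_{\bsk}$. Since $\varphi_s(\bsx)=\prod_{j=1}^s \varphi(x_j)$ and the differential operator $\partial_{\bsx}^{\bstau}=\prod_{j=1}^s \partial_{x_j}^{\tau_j}$ factors coordinatewise, it suffices to establish the one-dimensional identity
\begin{align}\label{eq:onedimplan}
\frac{\rd^{\tau}}{\rd x^{\tau}}\bigl(f\cdot\varphi\bigr)(x) = \varphi(x)\sum_{j=0}^{\tau}(-1)^{\tau-j}\binom{\tau}{\tau-j}\sqrt{(\tau-j)!}\,H_{\tau-j}(x)\,\frac{\rd^j f}{\rd x^j}(x)
\end{align}
for each coordinate and then take the product over $j=1,\ldots,s$; the multiindex binomial, factorial and Hermite factors in \eqref{eq:prodrule} are exactly the products of their one-dimensional counterparts.

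First I would apply the ordinary Leibniz rule to $\tfrac{\rd^\tau}{\rd x^\tau}(f\cdot\varphi)=\sum_{j=0}^{\tau}\binom{\tau}{j}\,f^{(j)}\,\varphi^{(\tau-j)}$. The crux is then to identify the derivatives of the Gaussian density. I would use the defining formula for the Hermite polynomials, namely $H_k(x)=\tfrac{(-1)^k}{\sqrt{k!}}\exp(x^2/2)\tfrac{\rd^k}{\rd x^k}\exp(-x^2/2)$, which gives $\varphi^{(m)}(x)=(-1)^m\sqrt{m!}\,H_m(x)\,\varphi(x)$ upon recalling $\varphi(x)=\tfrac{1}{\sqrt{2\pi}}\exp(-x^2/2)$. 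Substituting this with $m=\tau-j$ into the Leibniz expansion yields precisely \eqref{eq:onedimplan}, after matching $\binom{\tau}{j}=\binom{\tau}{\tau-j}$. This is the only nontrivial ingredient, and it is essentially a restatement of the definition rather than an independent computation.

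The main obstacle, such as it is, is justificational rather than combinatorial: one must ensure that $f\in\cH_{s,\alpha}$ is genuinely $\tau$-times differentiable for $\tau\le\alpha$ in each coordinate, so that the pointwise Leibniz rule applies. This is guaranteed by the smoothness structure established earlier in the excerpt, where the Hermite expansion of $\partial_{\bsx}^{\bstau}f$ for $\bstau\le\boldsymbol{\alpha}$ was given (via Lemma~\ref{le_app}), together with the uniform convergence argument following Definition~\ref{def:hermite} that shows these series represent continuous functions. Hence termwise differentiation is legitimate and the formal computation above is valid. Assembling the $s$ coordinatewise identities into the product then completes the proof, since $\sum_{\bsj\le\bstau}(\cdots)=\prod_{j=1}^s\bigl(\sum_{i=0}^{\tau_j}(\cdots)\bigr)$ distributes the product of one-dimensional sums into the multiindex sum appearing in \eqref{eq:prodrule}.
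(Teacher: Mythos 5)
Your proof is correct, but it takes a genuinely different route from the paper's. The paper proceeds by induction on the multiindex $\bstau$: it checks $\bstau=\bszero$ and $\bstau=\bse_i$ directly, then differentiates the induction hypothesis once more in the $i$-th coordinate, using $\partial_{x_i}\left(H_{\bsk}(\bsx)\varphi_s(\bsx)\right)=-\sqrt{k_i+1}\,H_{\bsk+\bse_i}(\bsx)\varphi_s(\bsx)$ together with Pascal's rule $\binom{\bstau}{\bstau-\bsj}+\binom{\bstau}{\bstau-\bsj+\bse_i}=\binom{\bstau+\bse_i}{\bstau+\bse_i-\bsj}$ to recombine the two resulting sums. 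You instead apply the Leibniz rule directly and read off $\varphi^{(m)}(x)=(-1)^m\sqrt{m!}\,H_m(x)\varphi(x)$ from the Rodrigues-type definition of $H_m$; this collapses all of the combinatorial bookkeeping into the standard Leibniz coefficients and is shorter and more transparent, at the cost of invoking the derivative formula for $\varphi$ rather than deriving everything from the single-step recursion. One caveat on phrasing: the identity does not literally factor into $s$ one-dimensional identities, because $f$ itself is not a product of univariate functions. What you are actually using is that iterating the one-dimensional Leibniz rule coordinate by coordinate yields the multivariate Leibniz formula $\partial_{\bsx}^{\bstau}(f\cdot\varphi_s)=\sum_{\bsj\le\bstau}\binom{\bstau}{\bsj}\,\partial_{\bsx}^{\bsj}f\,\partial_{\bsx}^{\bstau-\bsj}\varphi_s$, and that $\partial_{\bsx}^{\bstau-\bsj}\varphi_s$ factors coordinatewise because $\varphi_s$ does; with that reading the "assembly" step is sound. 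Your appeal to the smoothness of $f\in\cH_{s,\alpha}$ for $\bstau\le\boldsymbol{\alpha}$ to justify the pointwise differentiation is at the same level of rigor as the paper's own induction, which likewise differentiates pointwise.
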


\begin{proof}
We show this by induction on $s$. It is obvious that \eqref{eq:prodrule} holds for $\bstau=\bszero$. Now we denote by $\bse_i$ the multiindex whose $i$-th entry is $1$ with the remaining entries set to $0$. Then for any $i=1,\ldots,s$ we have for $\bstau=\bse_i$ that
\begin{align*}
\partial_{\bsx}^{\bstau}\left(f\cdot\varphi_s\right)(\bsx)&=\partial_{x_i}\left(f(\bsx)\varphi_s(\bsx)\right)\\
&=\partial_{x_i}f(\bsx)\varphi_s(\bsx)-x_i f(\bsx)\varphi_s(\bsx)\\
&=\left(H_{\bszero}(\bsx)\,\partial_{\bsx}^{\bse_i}f(\bsx)-H_{\bse_i}(\bsx)f(\bsx)\right)\varphi_s(\bsx).
\end{align*}
Next we assume that \eqref{eq:prodrule} holds for some $\bstau$. Then for any $i=1,\ldots,s$ we get that 
\begin{align*}
&\partial_{\bsx}^{\bstau+\bse_i}\left(f(\bsx)\varphi_s(\bsx)\right)=\\
&\qquad =\partial_{x_i}\!\left(\sum_{\bsj\leq\bstau}(-1)^{\abs{\bstau-\bsj}}\binom{\bstau}{\bstau-\bsj}\sqrt{(\bstau-\bsj)!}H_{\bstau-\bsj}(\bsx)\,\partial_{\bsx}^{\bsj}f(\bsx)\varphi_s(\bsx)\right)\\
&\qquad =\sum_{\bsj\leq\bstau}(-1)^{\abs{\bstau-\bsj}}\binom{\bstau}{\bstau-\bsj}\sqrt{(\bstau-\bsj)!}\,\partial_{x_i}\!\left(H_{\bstau-\bsj}(\bsx)\partial_{\bsx}^{\bsj}f(\bsx)\varphi_s(\bsx)\right)\\
&\qquad =\sum_{\bsj\leq\bstau}(-1)^{\abs{\bstau-\bsj}}\binom{\bstau}{\bstau-\bsj}\sqrt{(\bstau-\bsj)!}\left(\partial_{\bsx}^{\bse_i}H_{\bstau-\bsj}(\bsx)-x_i H_{\bstau-\bsj}(\bsx)\right)\partial_{\bsx}^{\bsj}f(\bsx)\varphi_s(\bsx)\\
&\qquad\quad+\sum_{\bsj\leq\bstau}(-1)^{\abs{\bstau-\bsj}}\binom{\bstau}{\bstau-\bsj}\sqrt{(\bstau-\bsj)!}\, H_{\bstau-\bsj}(\bsx)\,\partial^{\bsj+\bse_i}f(\bsx)\varphi_s(\bsx)\\
&\qquad =\sum_{\bsj\leq\bstau}(-1)^{\abs{\bstau-\bsj+\bse_i}}\binom{\bstau}{\bstau-\bsj}\sqrt{(\bstau-\bsj+\bse_i)!}\,H_{\bstau-\bsj+\bse_i}(\bsx)\,\partial_{\bsx}^{\bsj}f(\bsx)\varphi_s(\bsx)\\
&\qquad\quad+\sum_{0<\bsj\leq\bstau+\bse_i}(-1)^{\abs{\bstau-\bsj+\bse_i}}\binom{\bstau}{\bstau-\bsj+\bse_i}\sqrt{(\bstau-\bsj+\bse_i)!}\, H_{\bstau-\bsj+\bse_i}(\bsx)\,\partial_{\bsx}^{\bsj}f(\bsx)\varphi_s(\bsx).
\end{align*}
If we use that for all $\bsj\leq\bstau$ and $i=1,\ldots,s$,
\begin{align*}
\binom{\bstau}{\bstau-\bsj}+\binom{\bstau}{\bstau-\bsj+\bse_i}=\binom{\bstau+\bse_i}{\bstau+\bse_i-\bsj},
\end{align*}
we end up with
\begin{align*}
&\partial_{\bsx}^{\bstau+\bse_i}(f\cdot\varphi_s)(\bsx)= \varphi_s(\bsx)\sum_{\bsj\leq \bstau+\bse_i}(-1)^{\abs{\bstau-\bsj+\bse_i}}\binom{\bstau+\bse_i}{\bstau+\bse_i-\bsj}\sqrt{(\bstau+\bse_i-\bsj)!}H_{\bstau+\bse_i-\bsj}(\bsx)\partial_{\bsx}^{\bstau}f(\bsx).
\end{align*}
\end{proof}

\subsubsection{Proof of Theorem \ref{thm:upperbound}}\label{sec:proof_thm2}

We will now prove the upper bound given in Theorem \ref{thm:upperbound}. To this end let $f\in \cH_{s,\alpha}$. Then the absolute integration error can be estimated by using the triangle inequality, i.e., 
\begin{align}
\err(f)&=\abs{\int_{\RR^s}f(\bsx)\varphi_s(\bsx)\rd \bsx-\frac{(2b)^s}{N}\sum_{i=1}^{N}f(\bsx_i)\varphi_s(\bsx_i)}\nonumber\\
&\leq \abs{\int_{\RR^s\backslash[-\bsb,\bsb]}f(\bsx)\varphi_s(\bsx)\rd\bsx}+\abs{\int_{[-\bsb,\bsb]}f(\bsx)\varphi_s(\bsx)\rd\bsx-\frac{(2b)^s}{N}\sum_{i=1}^{N}f(\bsx_i)\varphi_s(\bsx_i)}\nonumber\\
&=\err_1(f)+\err_2(f),\label{eq:estimate1}
\end{align} 
where 
\begin{align*}
\err_1(f)&:=\abs{\int_{\RR^s\backslash[-\bsb,\bsb]}f(\bsx)\varphi_s(\bsx)\rd \bsx}\\
\intertext{describes the error of approximating the integral outside of $[-\bsb,\bsb]$ by zero and}
\err_2(f)&:=\abs{\int_{[-\bsb,\bsb]}f(\bsx)\varphi_s(\bsx)\rd \bsx-\frac{(2b)^s}{N}\sum_{i=1}^{N}f(\bsx_i)\varphi_s(\bsx_i)}
\end{align*}
is the integration error which results by applying the QMC rule to the function $\bsx\mapsto f(\bsx)\varphi_s(\bsx)$ restricted to the interval $[-\bsb, \bsb]$. 

\paragraph{Estimate of $\err_1(f)$:} 
With Lemma~\ref{lem:boundedness} we get
\begin{align*}
\err_1(f)&\leq \int_{\RR^s\setminus [-\bsb,\bsb]} |f(\bsx)\varphi_s(\bsx)| \rd\bsx\\
& = \int_{\RR^s\setminus [-\bsb,\bsb]} |f(\bsx) \sqrt{\varphi_s(\bsx)}| \sqrt{\varphi_s(\bsx)} \rd\bsx\\
&\lesssim_{s,\alpha}  \norm{f}_{s,\alpha} \int_{\RR^s\setminus [-\bsb,\bsb]}\frac{\exp(-\bsx \cdot \bsx/4)}{(2 \pi)^{s/2}}\rd\bsx\\
&\lesssim_{s,\alpha}  \norm{f}_{s,\alpha} \int_{[0,\infty)^s\setminus [\bszero,\bsb]}\frac{\exp(-\bsx \cdot \bsx/4)}{\pi^{s/2}}\rd\bsx.
\end{align*}
Furthermore we have that 
\begin{align*}
\int_{[0,\infty)^s\setminus [\bszero,\bsb]}\frac{\exp(-\bsx \cdot \bsx/4)}{\pi^{s/2}}\rd\bsx&=\int_{[0,\infty)^s}\frac{\exp(-\bsx \cdot \bsx/4)}{\pi^{s/2}}\rd\bsx-\int_{[\bszero,\bsb]}\frac{\exp(-\bsx \cdot \bsx/4)}{\pi^{s/2}}\rd\bsx\\
&=\left(\frac{1}{\sqrt{\pi}} \int_{0}^{\infty}\exp(-x^2/4) \rd x\right)^s-\left(\frac{1}{\sqrt{\pi}}\int_{0}^{b} \exp(-x^2/4) \rd x\right)^s\\
&=1-\left(\frac{1}{\sqrt{\pi}} \int_{0}^{b} \exp(-x^2/4) \rd x\right)^s\\
&\leq 1-\left(1-{\rm e}^{-b^2/4}\right)^s\\
& \le s \ {\rm e}^{-b^2/4}\\
& = \frac{s}{N^{\alpha}},
\end{align*}
where we used that $b=2\sqrt{\alpha\log N}$.
This shows that
\begin{align}\label{eq:err1estimate}
\err_1(f)&\lesssim_{s,\alpha} \norm{f}_{s,\alpha}\frac{1}{N^\alpha}.
\end{align}

\paragraph{Estimate of $\err_2(f)$:} 

To estimate $\err_2(f)$ we will derive an upper bound which includes the worst-case error of integration in the ANOVA space $\cH_{s,\alpha}^{\sob}$. To this end we first transform the problem from $[-\bsb,\bsb]$ to $[0,1]^s$, i.e.
\begin{align}
\err_2(f)&=\abs{\int_{[-\bsb,\bsb]}f(\bsx)\varphi_s(\bsx)\rd\bsx-\frac{(2b)^s}{N}\sum_{i=1}^{N}f(\bsx_i)\varphi_s(\bsx_i)}\nonumber\\
&=(2b)^s\abs{\int_{[0,1]^s}f(\cB_b(\bsz))\varphi_s(\cB_b(\bsz))\rd\bsz-\frac{1}{N}\sum_{i=1}^{N}f(\cB_b(\bsz_i))\varphi_s(\cB_b(\bsz_i))}.\label{eq:err2newformulation}
\end{align}

Now we need the following lemma:

\begin{lem}\label{lem:spacerelation}
Let $f\in\cH_{s,\alpha}$ and $b>0$. Then the function $g:[0,1)^s\rightarrow\RR^s$,  given by $g=(f \cdot \varphi_s)\circ \cB_b$ with $\cB_b$ as in \eqref{eq:map}, belongs to $\cH_{s,\alpha}^{\sob}$ and furthermore,
\begin{align}\label{eq:normestimate}
\norm{g}_{\sob,s,\alpha} \lesssim_{s,\alpha} b^{s(\alpha-1/2)} \norm{f}_{s,\alpha}.
\end{align}
\end{lem}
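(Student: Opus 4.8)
The plan is to express $\norm{g}_{\sob,s,\alpha}^2$ in terms of the quantities $\int_{\RR^s}(\partial_{\bsx}^{\bsj}f)^2\varphi_s\rd\bsx$ with $\bsj\le\bsalpha$, since these are exactly the building blocks of $\norm{f}_{s,\alpha}^2$. First I would use that $\cB_b$ acts coordinatewise as $z_j\mapsto 2bz_j-b$, so every differentiation in $z_j$ yields a factor $2b$; hence for any multiindex $\boldsymbol{\sigma}\le\bsalpha$,
\[
\partial_{\bsz}^{\boldsymbol{\sigma}}g(\bsz)=(2b)^{\abs{\boldsymbol{\sigma}}}\,\big(\partial_{\bsx}^{\boldsymbol{\sigma}}(f\varphi_s)\big)(\cB_b(\bsz)).
\]
Substituting this into the definition of $\norm{g}_{\sob,s,\alpha}^2$ with $\boldsymbol{\sigma}=(\bstau_{\uu},\alpha_{-\uu})$ (so $\abs{\boldsymbol{\sigma}}=\abs{\bstau_{\uu}}+\alpha(s-\abs{\uu})$), and then reversing the change of variables $\bsx=\cB_b(\bsz)$ --- which contributes a Jacobian $(2b)^{-\abs{\uu}}$ in the inner $\bsz_{\uu}$-integral and $(2b)^{-(s-\abs{\uu})}$ in the outer $\bsz_{-\uu}$-integral --- turns the $(\uu,\bstau_{\uu})$-summand into $(2b)^{2\abs{\boldsymbol{\sigma}}-\abs{\uu}-s}$ times
\[
\int_{[-b,b]^{s-\abs{\uu}}}\Big(\int_{[-b,b]^{\abs{\uu}}}\partial_{\bsx}^{\boldsymbol{\sigma}}(f\varphi_s)(\bsx)\,\rd\bsx_{\uu}\Big)^2\rd\bsx_{-\uu}.
\]

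Next I would expand $\partial_{\bsx}^{\boldsymbol{\sigma}}(f\varphi_s)$ by Lemma~\ref{lem:prodrule}, which exhibits it as $\varphi_s$ times a finite sum over $\bsj\le\boldsymbol{\sigma}$ of terms $c_{\bsj}H_{\boldsymbol{\sigma}-\bsj}\,\partial_{\bsx}^{\bsj}f$ with constants $c_{\bsj}$ depending only on $s,\alpha$. By Minkowski's inequality in $L_2(\rd\bsx_{-\uu})$ it then suffices to bound, for each fixed $\bsj$, the inner object $\int_{[-b,b]^{\abs{\uu}}}\varphi_s\,H_{\boldsymbol{\sigma}-\bsj}\,\partial_{\bsx}^{\bsj}f\,\rd\bsx_{\uu}$ in $L_2(\rd\bsx_{-\uu})$. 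Writing $\varphi_s H_{\boldsymbol{\sigma}-\bsj}\partial_{\bsx}^{\bsj}f=\big(\sqrt{\varphi_s}H_{\boldsymbol{\sigma}-\bsj}\big)\big(\sqrt{\varphi_s}\,\partial_{\bsx}^{\bsj}f\big)$ and applying Cauchy--Schwarz only in the $\bsx_{\uu}$-variables, the Hermite factor contributes, coordinate by coordinate in $\uu$, a factor $\int_{\RR}\varphi(x)H_k(x)^2\rd x=1$ (orthonormality), while the Cramér-type bound \eqref{eq:cramer} controls the residual Gaussian weight in the $-\uu$-coordinates; after integrating the square over $\bsx_{-\uu}$ the whole expression collapses to
\[
\int_{[-b,b]^s}\varphi_s(\bsx)\big(\partial_{\bsx}^{\bsj}f(\bsx)\big)^2\rd\bsx\le\int_{\RR^s}\varphi_s(\bsx)\big(\partial_{\bsx}^{\bsj}f(\bsx)\big)^2\rd\bsx\le\norm{f}_{s,\alpha}^2,
\]
the last step using $\bsj\le\bsalpha$. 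The finiteness of these integrals also proves $\partial_{\bsx}^{\boldsymbol{\sigma}}(f\varphi_s)\in L_2(\RR^s)$ for all $\boldsymbol{\sigma}\le\bsalpha$, which together with the continuity of the underlying Hermite series (as in the discussion after Definition~\ref{def:hermite}) establishes $g\in\cH_{s,\alpha}^{\sob}$.

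It remains to collect the powers of $b$. There are only finitely many triples $(\uu,\bstau_{\uu},\bsj)$, each contributing $\lesssim_{s,\alpha}(2b)^{2\abs{\boldsymbol{\sigma}}-\abs{\uu}-s}\norm{f}_{s,\alpha}^2$. Using $\abs{\boldsymbol{\sigma}}=\abs{\bstau_{\uu}}+\alpha(s-\abs{\uu})$ and $\abs{\bstau_{\uu}}\le(\alpha-1)\abs{\uu}$ one computes
\[
\big(2\abs{\boldsymbol{\sigma}}-\abs{\uu}-s\big)-s(2\alpha-1)=2\abs{\bstau_{\uu}}-(2\alpha+1)\abs{\uu}\le-3\abs{\uu}\le0,
\]
so every exponent is at most $s(2\alpha-1)=2s(\alpha-\tfrac12)$, with equality only for $\uu=\emptyset$. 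In the relevant regime $b\ge1$ (in particular in the application $b=2\sqrt{\alpha\log N}>1$) the top-order term dominates, and summing gives $\norm{g}_{\sob,s,\alpha}^2\lesssim_{s,\alpha}b^{2s(\alpha-1/2)}\norm{f}_{s,\alpha}^2$, i.e.\ the claimed estimate \eqref{eq:normestimate}.

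The step I expect to be the crux is the Cauchy--Schwarz reduction: one must split the Gaussian weight so that the $\bsx_{\uu}$-integration inside the square stays separate from the $\bsx_{-\uu}$-integration outside, and invoke orthonormality on $\uu$ and the Cramér bound on $-\uu$ in a matching way, so that precisely one clean term $\int_{\RR^s}\varphi_s(\partial_{\bsx}^{\bsj}f)^2$ survives. The coordinatewise bookkeeping of which half of $\varphi_s$ accompanies which factor is the delicate part; by contrast, the power-counting in $b$ and the verification of $g\in\cH_{s,\alpha}^{\sob}$ are routine once the substitution's Jacobian is tracked.
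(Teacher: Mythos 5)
Your proposal is correct and follows essentially the same route as the paper: the chain rule for the affine map $\cB_b$, the product-rule expansion of Lemma~\ref{lem:prodrule}, Cram\'er's bound together with Cauchy--Schwarz to reduce every term to $\int_{\RR^s}(\partial^{\bsj}_{\bsx}f)^2\varphi_s\rd\bsx\le\norm{f}_{s,\alpha}^2$, and the same power count yielding $b^{s(2\alpha-1)}$ for the squared norm. The only real difference is that the paper discards the inner $\bsz_{\uu}$-average at the outset by Jensen's inequality, reducing the norm to $\sum_{\bstau\in\{0,\ldots,\alpha\}^s}\int_{[0,1]^s}(\partial^{\bstau}_{\bsz}g)^2\rd\bsz$, whereas you retain it and compensate with a coordinate-split Cauchy--Schwarz (orthonormality on $\uu$, Cram\'er on the complement); both land on the same dominant term $\uu=\emptyset$, and both, as your explicit caveat notes, implicitly require $b$ bounded away from $0$.
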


\begin{proof}
Let $f\in \cH_{s,\alpha}$. Using the Cauchy-Schwarz inequality we get
\begin{align*}
\norm{g}_{\sob,s,\alpha}^2&=\sum_{\uu\subseteq\{1,\ldots,s\}}\sum_{\bstau_{\uu}\in\{0,\ldots,\alpha-1\}^{\abs{\uu}}}\int_{[0,1]^{s-\abs{\uu}}}\left(\int_{[0,1]^{\abs{\uu}}}\partial_{\bsz}^{(\bstau_{\uu},\alpha_{-\uu})}g(\bsz)\rd\bsz_{\uu}\right)^2\rd\bsz_{-\uu}\\
&\leq \sum_{\uu\subseteq\{1,\ldots,s\}}\sum_{\bstau_{\uu}\in\{0,\ldots,\alpha-1\}^{\abs{\uu}}} \int_{[0,1]^{s}}\left(\partial_{\bsz}^{(\bstau_{\uu},\alpha_{-\uu})}g(\bsz)\right)^2\rd\bsz\\
&=\sum_{\bstau\in\{0,\ldots,\alpha\}^{s}} \int_{[0,1]^{s}}\left(\partial_{\bsz}^{\bstau}g(\bsz)\right)^2\rd\bsz.
\end{align*}
Since $g=(f\cdot \varphi_s)\circ\cB_b$, we obtain
\begin{align*}
\norm{g}_{\sob,s,\alpha}^2&\leq \sum_{\bstau\in\{0,\ldots,\alpha\}^{s}} \int_{[0,1]^{s}}\left(\partial_{\bsz}^{\bstau}(f\cdot\varphi_s)(\cB_b(\bsz))\right)^2\rd\bsz\\
&=\frac{1}{(2b)^s}\sum_{\bstau\in\{0,\ldots,\alpha\}^{s}}(2b)^{2\abs{\bstau}} \int_{[-\bsb,\bsb]}\left(\partial_{\bsx}^{\bstau}(f\cdot\varphi_s)(\bsx)\right)^2\rd\bsx,
\end{align*}
where we used $\partial_{\bsz}^{\bstau}(f\cdot\varphi_s)(\cB_b(\bsz))=(2b)^{\abs{\bstau}}\partial_{\bsx}^{\bstau}(f\cdot\varphi_s)(\bsx)$ which holds by the chain rule. From Lemma \ref{lem:prodrule} we know that
\begin{align*}
&\int_{[-\bsb,\bsb]}\left(\partial_{\bsx}^{\bstau}(f\cdot\varphi_s)(\bsx)\right)^2\rd\bsx\\
&\qquad=\int_{[-\bsb,\bsb]}\left[\sum_{\bsj\leq\bstau}(-1)^{\abs{\bstau-\bsj}}\binom{\bstau}{\bstau-\bsj}\sqrt{(\bstau-\bsj)!}\,H_{\bstau-\bsj}(\bsx)\sqrt{\varphi_s(\bsx)}\,\partial_{\bsx}^{\bsj}f(\bsx)\right]^2\varphi_s(\bsx)\rd\bsx.
\end{align*}
With Cramer's bound \eqref{eq:cramer} it follows that
\begin{align*}
\int_{[-\bsb,\bsb]}\left(\partial_{\bsx}^{\bstau}(f\cdot\varphi_s)(\bsx)\right)^2\rd\bsx&\leq\int_{[-\bsb,\bsb]}\left[\sum_{\bsj\leq\bstau}\binom{\bstau}{\bstau-\bsj}\sqrt{(\bstau-\bsj)!}\,\abs{\partial_{\bsx}^{\bsj}f(\bsx)}\right]^2\varphi_s(\bsx)\rd\bsx\\
&= \sum_{\bsj_1,\bsj_2\leq\bstau}\binom{\bstau}{\bstau-\bsj_1}\sqrt{(\bstau-\bsj_1)!}\,\binom{\bstau}{\bstau-\bsj_2}\sqrt{(\bstau-\bsj_2)!}\\
&\qquad\times\int_{[-\bsb,\bsb]}\abs{\partial_{\bsx}^{\bsj_1}f(\bsx)\partial_{\bsx}^{\bsj_2}f(\bsx)}\varphi_s(\bsx)\rd\bsx.
\end{align*}
Furthermore, using the Cauchy-Schwarz inequality we get 
\begin{align*}
&\int_{[-\bsb,\bsb]}\abs{\partial_{\bsx}^{\bsj_1}f(\bsx)\partial_{\bsx}^{\bsj_2}f(\bsx)}\varphi_s(\bsx)\rd\bsx\\
&\qquad\leq \left(\int_{[-\bsb,\bsb]}\abs{\partial_{\bsx}^{\bsj_1}f(\bsx)}^2\varphi_s(\bsx)\rd\bsx\right)^{1/2}\left(\int_{[-\bsb,\bsb]}\abs{\partial_{\bsx}^{\bsj_2}f(\bsx)}^2\varphi_s(\bsx)\rd\bsx\right)^{1/2}\\
&\qquad\leq \norm{f}_{s,\alpha}^2,
\end{align*}
and so we have, 
\begin{align*}
\int_{[-\bsb,\bsb]}\left(\partial_{\bsx}^{\bstau}(f\cdot\varphi_s)(\bsx)\right)^2\rd\bsx&\leq \norm{f}_{s,\alpha}^2\left(\sum_{\bsj\leq\bstau}\binom{\bstau}{\bstau-\bsj}\sqrt{(\bstau-\bsj)!}\right)^2\\
&=\norm{f}_{s,\alpha}^2 \prod_{i=1}^s\left(\sum_{j=0}^{\tau_i}\binom{\tau_i}{\tau_i-j}\sqrt{(\tau_i-j)!}\right)^2\\
& \le \norm{f}_{s,\alpha}^2 \prod_{i=1}^s \tau_i! 2^{2 \tau_i},
\end{align*}
where we used that
\begin{align*}
\left(\sum_{j=0}^{\tau}\binom{\tau}{\tau-j}\sqrt{(\tau-j)!}\right)^2&\leq\tau!\left(\sum_{j=0}^{\tau}\binom{\tau}{\tau-j}\right)^2=\tau!\,2^{2\tau}.
\end{align*}
Finally we get,
\begin{align*}
\norm{g}_{\sob,s,\alpha}^2&\leq \norm{f}_{s,\alpha}^2 \sum_{\bstau\in\{0,\ldots,\alpha\}^{s}}(2b)^{2\abs{\bstau}-s} \prod_{j=1}^{s}\tau_j!\,2^{2\tau_j}\\
&\leq \norm{f}_{s,\alpha}^2 \left(\frac{\alpha!}{2b} \sum_{\tau=0}^{\alpha}(4b)^{2\tau}\right)^s\\
&\lesssim_{s,\alpha} \norm{f}_{s,\alpha}^2 b^{s(2 \alpha-1)},
\end{align*}
and the result follows by taking the square-root. 
\end{proof}

We continue with estimating $\err_2(f)$. Because of Lemma~\ref{lem:spacerelation} we know that functions of the form $(f\cdot \varphi_s)\circ\cB_b$ belong to $\cH^{\sob}_{s,\alpha}$ and so we obtain from \eqref{eq:err2newformulation},
\begin{align*}
\err_2(f)&\leq (2b)^s\norm{(f\circ\cB_b)\cdot(\varphi_s\circ\cB_b)}_{\sob,s,\alpha} e(Q_{N,s},\cH_{s,\alpha}^{\sob}).
\end{align*}
With the norm estimate \eqref{eq:normestimate} and $b=2\sqrt{\alpha\log N}$ we achieve
\begin{align}
\err_2(f) 
&\lesssim_{s,\alpha} \norm{f}_{s,\alpha} (\log N)^{s \frac{2\alpha+1}{4}}e(Q_{N,s},\cH_{s,\alpha}^{\sob}). \label{eq:err2estimate}
\end{align}

\paragraph{Estimate of $\err(f)$:} 
Now we insert \eqref{eq:err1estimate} and \eqref{eq:err2estimate} into \eqref{eq:estimate1}. This way we end up with the following estimate on the absolute integration error of $f$:
\begin{align*}
\err(f)&\lesssim_{s,\alpha}  \norm{f}_{s,\alpha}  \left(\frac{1}{N^\alpha}+(\log N)^{s \frac{2\alpha+1}{4}} e(Q_{N,s},\cH_{s,\alpha}^{\sob})\right).
\end{align*}
This implies the upper bound on the worst-case error as stated in Theorem~\ref{thm:upperbound}. \hfill $\qed$

\begin{remark}
While proving the upper bound on the worst case error, we had some constants
``hidden'' in the $\lesssim_{s,\alpha}$ notation.  In some of the estimates the implied constants grow exponentially with $s$. However, in this paper we are interested on the optimal asymptotic order of magnitude of the worst-case error for $N$ tending to infinity. In this sense the dependence of the implied constants on the dimension is not an issue. This would be a matter of tractability which we leave for future research.

\end{remark}


\subsection{Digital nets} \label{sec:construct}

For the integration in the ANOVA space we use digital nets over a suitable finite field of prime order. The construction of digital nets has been introduced by Niederreiter \cite{niesiam}. For a more recent introduction into this topic we refer to \cite{DP10}. For a prime number $q$ we identify the finite field $\FF_q$ with the set $\{0,1,\ldots,q-1\}$ equipped with the usual arithmetic operations modulo $q$.

\begin{definition}\rm
Let $\FF_q$ be the finite field of prime order $q$ and let $s,m,n \in \NN$. Let $C_1,\ldots,C_s \in \FF_q^{n \times m}$ be $n \times m$ matrices over $\FF_q$.
For each $h \in \{0,\ldots,q^m-1\}$ compute the $q$-adic expansion $h=\eta_0+\eta_1 q+\cdots+\eta_{m-1} q^{m-1}$, where $\eta_0,\eta_1,\ldots,\eta_{m-1}\in \FF_q$. 
Then compute for each $j \in \{1,2,\ldots,s\}$ the matrix-vector product  $$C_j \cdot (\eta_0,\eta_1,\ldots,\eta_{m-1})^{\top}=:(\xi_{h,j,1}, \xi_{h,j,2},\ldots,\xi_{h,j,n})^{\top}$$ over $\FF_q$. Then the point set  $\cP=\{\bsx_1,\bsx_2,\ldots,\bsx_{q^m}\}$ with $\bsx_h=(x_{h,1},x_{h,2},\ldots,x_{h,s})$ and $$x_{h,j}=\frac{\xi_{h,j,1}}{q}+\frac{\xi_{h,j,2}}{q^2}+\cdots+\frac{\xi_{h,j,n}}{q^n}$$
for $h \in \{0,\ldots,q^m-1\}$ and $j \in \{1,2,\ldots,s\}$ is called a {\it digital net over $\FF_q$ with generating matrices $C_1,\ldots,C_s$}.
\end{definition}

Note that a digital net with the above parameters consists of $q^m$ elements in $[0,1)^s$. There are several powerful constructions of generating matrices for digital nets with excellent uniform distribution properties, e.g. from Sobol', Faure, Niederreiter and Niederreiter-Xing (see, e.g., \cite[Chapter~8]{DP10} and the references therein). 

\paragraph{Higher order nets according to Dick.}

In \cite{D07,D08} Dick introduced the powerful concept of digital higher order nets over $\FF_q$ for the integration in the ANOVA space of smoothness $\alpha$. It is known that such nets can achieve a convergence rate of order $(\log N)^{\alpha s}/N^{\alpha}$ where $N=q^m$ (see \cite[Section~15.6]{DP10}). For order $(2\alpha + 1)$ nets an improvement was achieved in \cite{GSY}, which shows a convergence rate of order $(\log N)^{(s-1)/2} / N^\alpha$, which is the best possible rate of convergence. For these nets we obtain the following corollary to Theorem~\ref{thm:upperbound}:

\begin{corollary}\label{cor1}
Let $\alpha\in\NN$ and $A_{N,s}$ be the quadrature rule defined in \eqref{eq:intrule} based on a higher order net of order $(2 \alpha + 1)$ over $\FF_q$ with $N=q^m$ elements. Then for the worst-case error of $A_{N,s}$ in the Hermite space $\cH_{s,\alpha}$ we have
\begin{align*}
e(A_{N,s},\cH_{s,\alpha})\lesssim_{s,\alpha} \frac{(\log{N})^{s \frac{2 \alpha+3}{4} - \frac{1}{2} }}{N^\alpha} \ \ \ \mbox{ where $N=q^m$.}
\end{align*}
\end{corollary}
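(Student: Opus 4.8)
The plan is to combine Theorem~\ref{thm:upperbound} with the known convergence rate for higher order nets of order $(2\alpha+1)$. Theorem~\ref{thm:upperbound} already reduces the worst-case error of the quadrature rule $A_{N,s}$ in the Hermite space to the worst-case error $e(Q_{N,s},\cH_{s,\alpha}^{\sob})$ of the underlying QMC rule in the ANOVA space, up to a logarithmic factor $(\log N)^{s(2\alpha+1)/4}$ and an additive term $N^{-\alpha}$. So the entire task is to plug in a point set whose ANOVA worst-case error is known to be small.

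First I would invoke the result cited from \cite{GSY} (see the paragraph preceding the corollary): for a higher order net of order $(2\alpha+1)$ over $\FF_q$ with $N=q^m$ points, the worst-case error of the QMC rule $Q_{N,s}$ in the ANOVA space of smoothness $\alpha$ satisfies the best-possible rate
\begin{align*}
e(Q_{N,s},\cH_{s,\alpha}^{\sob}) \lesssim_{s,\alpha} \frac{(\log N)^{(s-1)/2}}{N^{\alpha}}.
\end{align*}
Substituting this bound into the estimate of Theorem~\ref{thm:upperbound} gives
\begin{align*}
e(A_{N,s},\cH_{s,\alpha}) \lesssim_{s,\alpha} (\log N)^{s\frac{2\alpha+1}{4}}\,\frac{(\log N)^{(s-1)/2}}{N^{\alpha}} + \frac{1}{N^{\alpha}}.
\end{align*}
It then remains to collect the powers of $\log N$ in the dominant term and observe that the additive $N^{-\alpha}$ is negligible in comparison.

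The only genuine computation is the bookkeeping of the logarithmic exponent. Adding the two exponents gives $s\frac{2\alpha+1}{4} + \frac{s-1}{2} = s\frac{2\alpha+1}{4} + \frac{2s-2}{4} = s\frac{2\alpha+3}{4} - \frac{1}{2}$, which matches the exponent in the statement. Since $N\geq 2$ ensures $\log N \geq \log 2 > 0$, the term $N^{-\alpha} = (\log N)^{0} N^{-\alpha}$ is dominated (up to a constant depending on $s,\alpha$) by $(\log N)^{s(2\alpha+3)/4 - 1/2} N^{-\alpha}$, because the exponent $s\frac{2\alpha+3}{4}-\frac12$ is positive for all $s,\alpha\in\NN$. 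Hence both contributions are absorbed into the claimed bound.

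There is no real obstacle here; the corollary is essentially a direct specialization. The only point requiring a little care is that Theorem~\ref{thm:upperbound} is stated for a general QMC rule $Q_{N,s}$ based on an arbitrary point set in $[0,1)^s$, so I must confirm that a higher order digital net of order $(2\alpha+1)$ is a legitimate choice of such a point set and that the rate from \cite{GSY} applies to it with the weights $w_i = (2b)^s/N$ implicitly encoded in the rule \eqref{eq:intrule}. This is immediate from the definition of digital nets given in Subsection~\ref{sec:construct}, since these indeed produce $N=q^m$ points in $[0,1)^s$ to which the equal-weight QMC rule $Q_{N,s}$ is applied.
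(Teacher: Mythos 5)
Your proposal is correct and follows exactly the route the paper intends: the corollary is obtained by inserting the bound $e(Q_{N,s},\cH_{s,\alpha}^{\sob})\lesssim_{s,\alpha} (\log N)^{(s-1)/2}/N^{\alpha}$ from \cite{GSY} for order-$(2\alpha+1)$ nets into Theorem~\ref{thm:upperbound}, and your exponent bookkeeping $s\frac{2\alpha+1}{4}+\frac{s-1}{2}=s\frac{2\alpha+3}{4}-\frac{1}{2}$ is accurate. Nothing is missing.
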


Thus we have shown that the lower bound from Theorem~\ref{thm:lowerbound} is optimal up to $\log N$-factors and can be achieved using a quadrature rule of the form \eqref{eq:intrule}.

\section{Comparison with other methods}\label{sec:numerics}

So far there are two standard deterministic integration rules over $\RR^s$ with respect to the Gaussian measure, namely the {\it Gauss-Hermite rule} and transformed QMC-rules based on the inverse cumulative distribution function (CDF) of the Gaussian measure. For both methods no theoretical bounds are known which guarantee a worst-case error of order $N^{-\alpha}$ up to logarithmic terms. 

It is now interesting to compare the three methods with respect to the order of convergence numerically. Unfortunately, there is no closed form formula known for the worst-case error. So one has to use the series representation for the worst-case error which becomes numerically infeasible already for dimension $s=2$. For $s=1$ we show how to derive the series representation of the worst-case error. In the following we drop the index for the dimension, i.e.\ we write $A_{N}:=A_{N,1}$ as well as $\cH_{\alpha}:=\cH_{1,\alpha}$. 


Similar to \cite[Proposition 2.11]{DP10}, we get for the squared worst-case error that
\begin{align*}
e^2(A_{N},\cH_{\alpha})&=\int_{\RR}\int_{\RR}K(x,y)\varphi(x)\varphi(y)\rd x\rd y \\
&\qquad-2\sum_{i=1}^{N}w_i \int_{\RR}K(x,x_i)\varphi(x)\rd x +\sum_{i,j=1}^{N}w_iw_jK(x_i,x_j). 
\end{align*}
Using the kernel representation \eqref{eq:Kalpha} we obtain for the integrals,
\begin{align*}
 \int_{\RR}\int_{\RR}K(x,y)\varphi(x)\varphi(y)\rd x\rd y=\sum_{k=0}^{\infty}r_{\alpha}(k)\left(\int_{\RR}H_k(x)\varphi(x)dx\right)^2=r_{\alpha}(0)=1
\end{align*}
and for all $i=1,\ldots,N$,
\begin{align*}
 \int_{\RR}K(x,x_i)\varphi(x)\rd x =\sum_{k=0}^{\infty}r_{\alpha}(k)\int_{\RR}H_k(x)\varphi(x)dx H_k(x_i)=r_{\alpha}(0)H_0(x_i)=1
\end{align*}
Thus we get the worst-case error formula for a given algorithm with weights $w_i$ and nodes $x_i$, $i=1,\ldots,N$,
\begin{align*}
 e(A_{N},\cH_{\alpha})=\left(\left(1-\sum_{i=1}^{N}w_i\right)^2+\sum_{k=1}^{\infty}r_{\alpha}(k)\left(\sum_{i=1}^{N}w_i H_k(x_i)\right)^2\right)^{1/2}.
\end{align*}
However, for the numerical computation we have to cut off the infinite sum at some index $m\in\NN$ and we consider a truncated version of the worst-case error, given by
\begin{align*}
 e_m(A_{N},\cH_{\alpha})=\left(\left(1-\sum_{i=1}^{N}w_i\right)^2+\sum_{k=1}^{m}r_{\alpha}(k)\left(\sum_{i=1}^{N}w_i H_k(x_i)\right)^2\right)^{1/2}.
\end{align*}
In the following numerical tests this truncation parameter is chosen numerically by $m=5\times 10^7$ to keep the resulting truncation error small and negligible compared to the worst-case error. (It would be an easy exercise to prove an upper bound on the truncation error.)  

\begin{figure}[ht!]
\begin{center}
 \includegraphics[width=.8\textwidth]{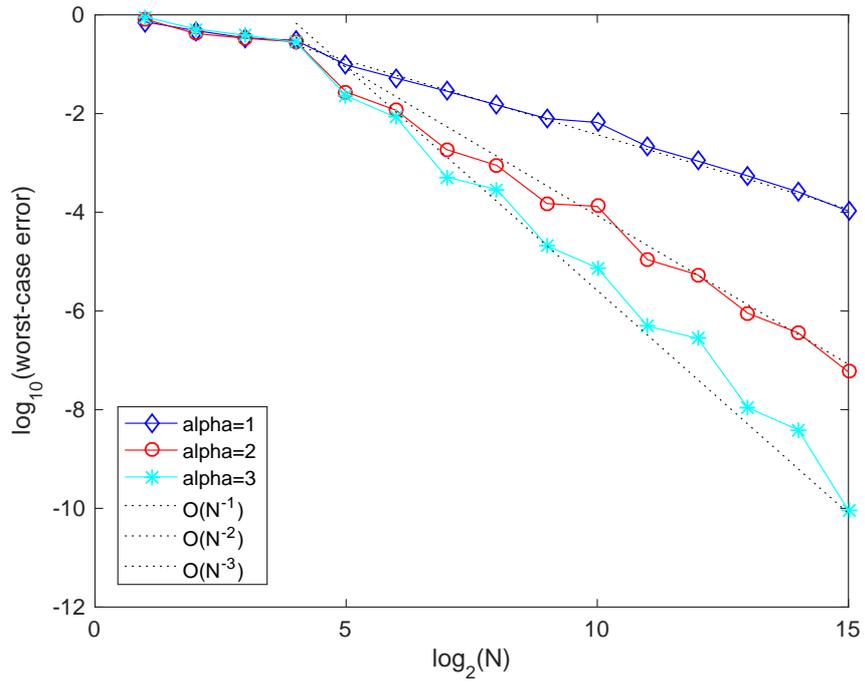}
\caption{Worst-case error of the Algorithm \eqref{eq:intrule} based on an interlaced Sobol' sequence.}
\label{fig:wceHOnets}
\end{center}
\end{figure}

First we consider the algorithm given by \eqref{eq:intrule} with a point set on the unit cube which comes from a so-called interlaced Sobol' sequence with interlacing factor equals to $\alpha$, see \cite{D07,D08}. Figure \ref{fig:wceHOnets} shows the worst-case error of this algorithm for $\alpha$ ranging from $1$ to $3$. It is an interesting observation that one can see the convergence rate $\cO(N^{-\alpha})$ of the worst-case error quite good already for rather small point sets.

\begin{figure}[ht!]
\begin{center}
 \includegraphics[width=.8\textwidth]{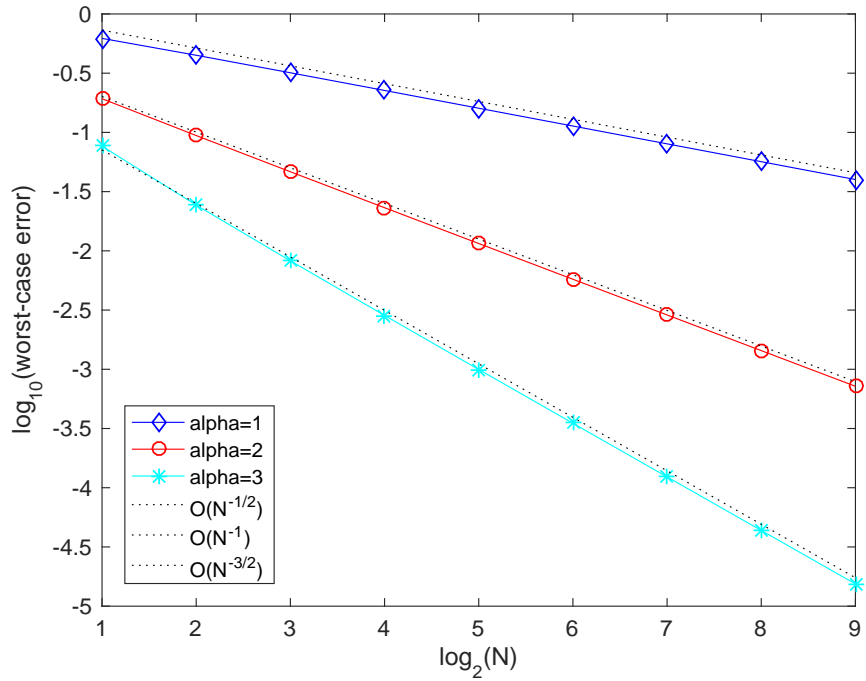}
\caption{Worst-case error of Gauss-Hermite rules.}
\label{fig:wceGH}
\end{center}
\end{figure}

A classical quadrature rule for integrals given by \eqref{eq:intprob} are the Gauss-Hermite rules, which are integration rules with integration nodes given by the roots of Hermite polynomials and with according weights. For more details on Gauss-Hermite rules we refer to \cite{hildebrand} and \cite{IKLP14}. However, Figure \ref{fig:wceGH} suggests that the worst-case error of Gauss-Hermite rules only behaves like $\cO(N^{-\alpha/2})$ which is not best possible for integration in Hermite spaces of finite smoothness, according to our lower bound.\footnote{We remark that one can show that Smolyak algorithms based on one-dimensional Gauss-Hermit rules achieve a convergence rate for the worst-case error of order at least $\cO(N^{-\alpha/2})$.} 

The other standard way of computing integrals of the form \eqref{eq:intprob} is to apply quasi-Monte Carlo integration by mapping the point set of a given QMC rule from the $s$-dimensional unit cube to the $\RR^s$ using the inverse CDF of the Gaussian measure. We remark the for this method there are no theoretical bounds known so far.

In our case here we use a higher order QMC rule based on interlaced Sobol' sequences with interlacing factor equals to $\alpha$. In Figure~\ref{fig:wceCDF} we see that for all three choices of $\alpha$ the worst-case error behaves like $\cO(N^{-1})$ which is good for $\alpha=1$, but not for $\alpha\geq 2$. It seems that the good distribution properties of the higher order nets, which guarantee the optimal convergence rate for integration in the ANOVA space defined over the unit cube, get lost by transforming the point set from $[0,1)^s$ to $\RR^s$ using the inverse CDF. This does not happen by the method presented in this paper, since the point set is just manipulated by a linear transformation which preserves the structure of the higher order nets.

\begin{figure}[ht!]
\begin{center}
 \includegraphics[width=.8\textwidth]{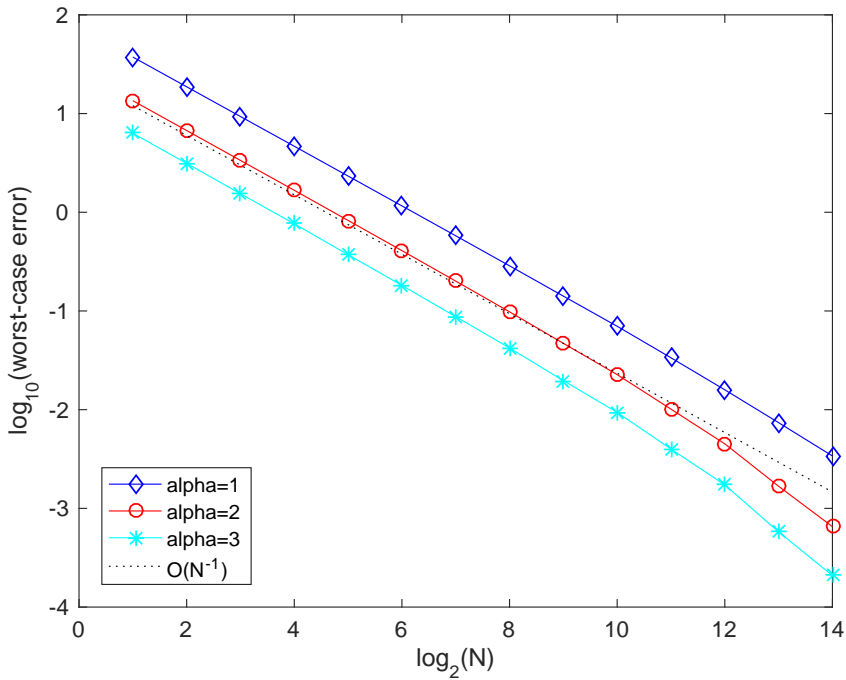}
\caption{Worst-case error of higher order quasi-Monte Carlo integration combined with the inverse cumulative distribution function.}
\label{fig:wceCDF}
\end{center}
\end{figure}

\section{Conclusions}

We introduced the notion of Hermite spaces of finite smoothness which we show to correspond to certain Sobolev-type spaces of functions on the $\RR^s$ and which are therefore of high practical interest. We considered the worst case error of integration with respect to standard Gaussian measure and we proved upper and lower bounds.

For proving the upper bound, we developed a novel and easy to implement algorithm which relies on the concept of higher order nets. The algorithm was shown to be of the optimal order in the number $N$ of integration nodes, up to factors which are polynomial in  $\log N$.

For dimension $1$ we compared numerically the convergence rate of the worst-case error of the presented method and two other standard integration rules.  

\section*{Appendix}

\begin{proof}[Proof of Lemma~\ref{lem:hermitebound}]
We first show the statement for $s=1$.  From \cite[Theorem 1]{K04} we know that for $k\geq 6$,
\begin{align*}
|H_k(x)^2\varphi(x)| \leq \frac{2}{3}C_k \exp\left(\frac{15}{8}\left(1+\frac{12}{4(2k)^{1/3}-9}\right)\right)\frac{1}{k^{1/6}}\qquad\text{for all } x\in\RR
\end{align*}
where
\begin{align*}
C_k=\begin{cases}\frac{2^{1/3}k\sqrt{4k-2}\,k!}{\sqrt{\pi}2^k\sqrt{8k^2-8k+3}\,(k/2)!^2}&\text{if } k \text{ even},\\[0.8em] \frac{\sqrt{8k^2-8k+3}\,(k-1)!}{\sqrt{\pi}2^{1/6}2^k\sqrt{2k-1}\,((k-1)/2)!^2}&\text{if } k \text{ odd}. \end{cases}
\end{align*}
It is easy to show that $C_k$ increases and converges towards $2^{1/3}/\pi$ as $k\rightarrow \infty$. Together with Cramer's bound \eqref{eq:cramer}, we get
\begin{align*}
|H_k(x)^2\varphi(x)|\leq \min\left(1,\frac{2^{4/3}}{3\pi}\exp\left(\frac{15}{8}\left(1+\frac{12}{4(2k)^{1/3}-9}\right)\right)\frac{1}{k^{1/6}}\right)\qquad\text{for all } x\in\RR.
\end{align*}
Note that the minimum is $1$ for all $k\leq 876$ and that 
\begin{align*}
\frac{2^{4/3}}{3\pi}\exp\left(\frac{15}{8}\left(1+\frac{12}{4(2k)^{1/3}-9}\right)\right)\leq 3.1\leq \pi\qquad\text{for all }k\geq 876.
\end{align*}
Thus we get for any $k\in\NN_0$ that
\begin{align*}
|H_k(x)\sqrt{\varphi(x)}| \leq \min\left(1,\frac{\sqrt{\pi}}{k^{1/12}}\right)\qquad\text{for all } x\in\RR.
\end{align*}
Now let $s\in\NN$. Then for any $\bsk\in\NN_0^s$ we get
\begin{align*}
|H_{\bsk}(\bsx)\sqrt{\varphi_s(\bsx)}|=\prod_{j=1}^{s} |H_{k_j}(x_j)\sqrt{\varphi(x_j)}|\leq \prod_{j=1}^{s}\min\left(1,\frac{\sqrt{\pi}}{k_j^{1/12}}\right)\qquad\text{for all }\bsx\in\RR^s,
\end{align*}
which shows the statement \eqref{eq:hermitebound}.
\end{proof}

\begin{lem}\label{lem:inthelp}
Let $\alpha\in\NN$ and $g:[0,1]\rightarrow\RR$ be given by $g(x)=(x-x^2)^{\alpha}$. Then for $a,b\in\RR$,
\begin{align*}
\int_{0}^{1}g^{(\alpha)}(z) g^{(\alpha)}(bz+a)\rd z =\frac{(\alpha!)^2}{2\alpha+1}\, b^{\alpha}.
\end{align*}
\end{lem}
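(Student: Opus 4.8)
The plan is to collapse the integral to a trivial one by integrating by parts $\alpha$ times, exploiting that $g(x)=(x-x^2)^{\alpha}=x^{\alpha}(1-x)^{\alpha}$ vanishes to order $\alpha$ at both endpoints. Concretely, Leibniz's rule applied to the factored form gives $g^{(j)}(0)=g^{(j)}(1)=0$ for every $0\le j\le\alpha-1$, and I would record this observation once at the outset, since every subsequent boundary term relies on it.

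Next I would carry out the integration by parts. Using the chain rule $\frac{\rd}{\rd z}g^{(k)}(bz+a)=b\,g^{(k+1)}(bz+a)$, one step applied to $\int_0^1 g^{(\alpha)}(z)\,g^{(\alpha)}(bz+a)\rd z$ transfers a derivative from the first factor to the second, yielding a boundary term $[g^{(\alpha-1)}(z)\,g^{(\alpha)}(bz+a)]_0^1$ together with a factor $-b$. Iterating $\alpha$ times, at the $j$-th step the first factor carries the derivative $g^{(\alpha-j)}$, which still vanishes at both endpoints because $\alpha-j\le\alpha-1$; hence all boundary terms drop out and I obtain
\begin{align*}
\int_0^1 g^{(\alpha)}(z)\,g^{(\alpha)}(bz+a)\rd z=(-b)^{\alpha}\int_0^1 g(z)\,g^{(2\alpha)}(bz+a)\rd z.
\end{align*}

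I would then observe that $g$ is a polynomial of degree exactly $2\alpha$ whose leading coefficient is $(-1)^{\alpha}$ (the top term of $(x-x^2)^{\alpha}$ being $(-x^2)^{\alpha}$), so its $2\alpha$-th derivative is the \emph{constant} $g^{(2\alpha)}\equiv(-1)^{\alpha}(2\alpha)!$. This is exactly the structural reason the answer is independent of $a$. The remaining integral reduces to $(-1)^{\alpha}(2\alpha)!\int_0^1 g(z)\rd z$, and $\int_0^1 g(z)\rd z=\int_0^1 z^{\alpha}(1-z)^{\alpha}\rd z=B(\alpha+1,\alpha+1)=(\alpha!)^2/(2\alpha+1)!$ is precisely the Beta integral already invoked in the proof of Theorem~\ref{thm:lowerbound}.

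Finally I would collect constants: $(-b)^{\alpha}\cdot(-1)^{\alpha}=b^{\alpha}$ and $(2\alpha)!/(2\alpha+1)!=1/(2\alpha+1)$, which assembles to the claimed value $\frac{(\alpha!)^2}{2\alpha+1}\,b^{\alpha}$. The computation is almost entirely mechanical; the only point demanding genuine care—the mild ``main obstacle''—is checking that \emph{each} of the $\alpha$ boundary terms vanishes, and this is handled uniformly by the order-$\alpha$ vanishing of $g$ at $0$ and $1$ noted in the first step.
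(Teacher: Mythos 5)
Your proof is correct, and it takes a genuinely different route from the paper's. You integrate by parts $\alpha$ times, using the order-$\alpha$ vanishing of $g$ at $0$ and $1$ to kill every boundary term, which collapses the integral to $(-b)^{\alpha}\,g^{(2\alpha)}\int_0^1 g(z)\rd z$ with $g^{(2\alpha)}\equiv(-1)^{\alpha}(2\alpha)!$ constant and $\int_0^1 g=B(\alpha+1,\alpha+1)$; each of these steps checks out, including the bookkeeping of signs and the observation that this is why the answer does not depend on $a$. The paper instead expands $g^{(\alpha)}(z)=\sum_{k=0}^{\alpha}(-1)^k\binom{\alpha}{k}\frac{(\alpha+k)!}{k!}z^k$, multiplies out the double sum, integrates termwise, and reduces everything to the combinatorial identity $\sum_{k_1=0}^{\alpha}(-1)^{k_1}\binom{\alpha}{k_1}\binom{\alpha+k_1}{k_1}\frac{1}{k_1+c+1}=0$ for $c\in\{0,\ldots,\alpha-1\}$, with the single surviving term $c=\alpha$ producing the constant. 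That identity is exactly the statement $\int_0^1 g^{(\alpha)}(z)z^c\rd z=0$ for $c<\alpha$, which is itself most naturally seen by your integration-by-parts argument; so your proof makes the structural reason behind the paper's cancellations explicit, avoids having to establish the binomial identities (which the paper asserts without proof), and is shorter. What the paper's computation buys in exchange is a fully explicit evaluation that never needs to track boundary terms. Either way the result is the same, and your argument is complete as written.
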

\begin{proof}
We have that
\begin{align*}
 g^{(\alpha)}(z)=\sum_{k=0}^{\alpha}(-1)^k\binom{\alpha}{k}\frac{(\alpha+k)!}{k!}z^{k}.
\end{align*}
Thus
\begin{align*}
&\int_{0}^{1}g^{(\alpha)}(z) g^{(\alpha)}(bz+a)\rd z \\
&\qquad= \sum_{k_1=0}^{\alpha}\sum_{k_2=0}^{\alpha}(-1)^{k_1+k_2}\binom{\alpha}{k_1}\binom{\alpha}{k_2}\frac{(\alpha+k_1)!}{k_1!}\frac{(\alpha+k_2)!}{k_2!}\int_{0}^{1}z^{k_1}(bz+a)^{k_2}\rd z\\
&\qquad=\sum_{k_1=0}^{\alpha}\sum_{k_2=0}^{\alpha}(-1)^{k_1+k_2}\binom{\alpha}{k_1}\binom{\alpha}{k_2}\frac{(\alpha+k_1)!}{k_1!}\frac{(\alpha+k_2)!}{k_2!}\sum_{\ell=0}^{k_2}\binom{k_2}{\ell}b^{k_2-\ell}a^{\ell}\int_{0}^{1}z^{k_1+k_2-\ell}\rd z\\
&\qquad=\sum_{k_1=0}^{\alpha}\sum_{k_2=0}^{\alpha}(-1)^{k_1+k_2}\binom{\alpha}{k_1}\binom{\alpha}{k_2}\frac{(\alpha+k_1)!}{k_1!}\frac{(\alpha+k_2)!}{k_2!}\sum_{\ell=0}^{k_2}\binom{k_2}{\ell}b^{k_2-\ell}a^{\ell}\frac{1}{k_1+k_2-\ell+1}\\
&\qquad=(\alpha!)^2\sum_{k_2=0}^{\alpha}\sum_{\ell=0}^{k_2}(-1)^{k_2}\binom{\alpha}{k_2}\binom{\alpha+k_2}{k_2}\binom{k_2}{\ell}b^{k_2-\ell}a^{\ell}\sum_{k_1=0}^{\alpha}(-1)^{k_1}\binom{\alpha}{k_1}\binom{\alpha+k_1}{k_1}\frac{1}{k_1+k_2-\ell+1}
\end{align*}
Next we study the inner sum in more detail. For the case that $k_2-\ell=\alpha$, which only holds if $k_2=\alpha$ and $\ell=0$, we get
\begin{align*}
 \sum_{k_1=0}^{\alpha}(-1)^{k_1}\binom{\alpha}{k_1}\binom{\alpha+k_1}{k_1}\frac{1}{k_1+\alpha+1}=\frac{(-1)^{\alpha}(\alpha!)^2}{(2\alpha)!(2\alpha+1)}.
\end{align*}
For all other combinations of $k_2$ and $\ell$ we have $c:=k_2-\ell\in\{0,\ldots,\alpha-1\}$ and
\begin{align*}
 \sum_{k_1=0}^{\alpha}(-1)^{k_1}\binom{\alpha}{k_1}\binom{\alpha+k_1}{k_1}\frac{1}{k_1+c+1}=0.
\end{align*}
To sum up, we get that
\begin{align*}
 \sum_{k_1=0}^{\alpha}(-1)^{k_1}\binom{\alpha}{k_1}\binom{\alpha+k_1}{k_1}\frac{1}{k_1+k_2-\ell+1}=\begin{cases}\frac{(-1)^{\alpha}(\alpha!)^2}{(2\alpha)!(2\alpha+1)} &\text{if } k_2=\alpha \wedge \ell=0\\0&\text{otherwise}\end{cases}
\end{align*}
and, consequently,
\begin{align*}
\int_{0}^{1}g^{(\alpha)}(z) g^{(\alpha)}(bz+a)\rd z &=(\alpha!)^2(-1)^{\alpha}\binom{\alpha}{\alpha}\binom{2\alpha}{\alpha}\binom{\alpha}{0}b^{\alpha}a^{0}\frac{(-1)^{\alpha}(\alpha!)^2}{(2\alpha)!(2\alpha+1)}\\
&=\binom{2\alpha}{\alpha}\frac{(\alpha!)^2}{(2\alpha)!}\frac{(\alpha!)^2}{(2\alpha+1)}\,b^{\alpha}\\
&=\frac{(\alpha!)^2}{(2\alpha+1)}\,b^{\alpha}.
\end{align*}
\end{proof}

\begin{lem}\label{le_app}
Let $f\in \cH_{s,\alpha}$ and let $\bstau\le (\alpha,\dots,\alpha)$. Then the $\bstau$th weak derivative of $f$ exists in $L^2(\RR^s,\varphi_s)$ and it is given by
\begin{align*}
\partial_{\bsx}^{\bstau}f = \sum_{\bsk\geq\bstau}\widehat{f}(\bsk)\sqrt{\frac{\bsk!}{(\bsk-\bstau)!}}\,H_{\bsk-\bstau}\,.
\end{align*}
\end{lem}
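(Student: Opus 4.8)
The plan is to exhibit the series on the right explicitly, show it converges in $L^2(\RR^s,\varphi_s)$, and then identify its limit as the weak $\bstau$-derivative of $f$ by a truncation-and-limit argument. Throughout I would use the Hermite expansion $f=\sum_{\bsk}\widehat{f}(\bsk)H_{\bsk}$ and the termwise derivative rule \eqref{eq:hermitederiv}.

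First I would set $g:=\sum_{\bsk\geq\bstau}\widehat{f}(\bsk)\sqrt{\bsk!/(\bsk-\bstau)!}\,H_{\bsk-\bstau}$ and verify $g\in L^2(\RR^s,\varphi_s)$. By orthonormality of the Hermite polynomials in $L^2(\varphi_s)$ one has $\|g\|_{L^2(\varphi_s)}^2=\sum_{\bsk\geq\bstau}\widehat{f}(\bsk)^2\,\bsk!/(\bsk-\bstau)!$, so it suffices to compare the coefficient weights with the reproducing weights. Writing $r_{s,\alpha}(\bsk)^{-1}=\prod_{j=1}^s\sum_{\sigma=0}^\alpha\beta_\sigma(k_j)=\sum_{\boldsymbol{\sigma}\in\{0,\ldots,\alpha\}^s}\prod_{j=1}^s\beta_{\sigma_j}(k_j)$ and observing that the single summand $\boldsymbol{\sigma}=\bstau$ (which is admissible precisely because $\bstau\le(\alpha,\ldots,\alpha)$) equals $\bsk!/(\bsk-\bstau)!$ for $\bsk\ge\bstau$ and vanishes otherwise, I obtain the pointwise bound $\bsk!/(\bsk-\bstau)!\le r_{s,\alpha}(\bsk)^{-1}$. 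Hence $\|g\|_{L^2(\varphi_s)}^2\le\sum_{\bsk}\widehat{f}(\bsk)^2 r_{s,\alpha}(\bsk)^{-1}=\|f\|_{s,\alpha}^2<\infty$, so the series defining $g$ converges in $L^2(\varphi_s)$.

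Next I would confirm that $g$ is the weak $\bstau$-derivative of $f$. Consider the polynomial partial sums $f_M:=\sum_{\|\bsk\|_\infty\le M}\widehat{f}(\bsk)H_{\bsk}$. Applying \eqref{eq:hermitederiv} coordinatewise gives $\partial_{\bsx}^{\bstau}f_M=\sum_{\bstau\le\bsk,\,\|\bsk\|_\infty\le M}\widehat{f}(\bsk)\sqrt{\bsk!/(\bsk-\bstau)!}\,H_{\bsk-\bstau}$, and since each $f_M$ is smooth and each test function $\phi\in C_c^\infty(\RR^s)$ has compact support, ordinary integration by parts yields $\int_{\RR^s}f_M\,\partial^{\bstau}\phi\,\rd\bsx=(-1)^{|\bstau|}\int_{\RR^s}(\partial^{\bstau}f_M)\,\phi\,\rd\bsx$ with no boundary terms. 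It then remains to pass to the limit $M\to\infty$ on both sides.

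The main obstacle will be exactly this passage to the limit, because the integration-by-parts identity is taken against Lebesgue measure while the convergences at hand are with respect to $\varphi_s$. I would resolve it by localization: on the compact set $K:=\supp(\phi)$ the weight $\varphi_s$ is bounded above and below by positive constants, so convergence in $L^2(\varphi_s)$ restricts to convergence in $L^2(K)$ in the Lebesgue sense. For the right-hand side this turns the $L^2(\varphi_s)$-convergence $\partial_{\bsx}^{\bstau}f_M\to g$ into $\int(\partial^{\bstau}f_M)\phi\to\int g\,\phi$ via Cauchy--Schwarz on $K$. For the left-hand side I would invoke the uniform convergence of $f_M\sqrt{\varphi_s}$ to $f\sqrt{\varphi_s}$ established earlier (in the argument showing $\cH_r$ is closed); dividing by $\sqrt{\varphi_s}\ge\sqrt{c_K}>0$ on $K$ gives $f_M\to f$ uniformly on $K$, hence $\int f_M\,\partial^{\bstau}\phi\to\int f\,\partial^{\bstau}\phi$. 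Combining the two limits produces $\int f\,\partial^{\bstau}\phi=(-1)^{|\bstau|}\int g\,\phi$ for every $\phi\in C_c^\infty(\RR^s)$, which is precisely the statement that $\partial_{\bsx}^{\bstau}f=g$ in the weak sense, with $g\in L^2(\RR^s,\varphi_s)$ as already shown.
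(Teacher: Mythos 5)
Your proof is correct, and while it shares the paper's overall architecture (first show the candidate series lies in $L^2(\RR^s,\varphi_s)$ via Parseval and a coefficient comparison, then verify the weak-derivative identity against compactly supported test functions), the second step is carried out by a genuinely different mechanism. For the $L^2$ bound you identify $\bsk!/(\bsk-\bstau)!$ as the single summand $\boldsymbol{\sigma}=\bstau$ in the expansion of $r_{s,\alpha}(\bsk)^{-1}$, giving the clean inequality $\|g\|_{L^2(\varphi_s)}\le\|f\|_{s,\alpha}$ with constant $1$; the paper instead uses the asymptotics $r_\alpha(k)\asymp k^{-\alpha}$ together with $\bsk!/(\bsk-\bstau)!\le\bsk^{\bstau}\le\bsk^{\boldsymbol{\alpha}}$, which yields the same conclusion with an unspecified constant. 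For the identification, the paper works directly with the full series: it writes the test function as $\rho=\psi\varphi_s$, expands $\psi$ in Hermite polynomials, and uses the adjoint identity $\partial^{j}(H_k\varphi)=(-1)^{j}\sqrt{(k+j)!/k!}\,H_{k+j}\varphi$ plus orthogonality to compute $\int f\,\partial^{\bstau}\rho$ in one stroke (silently interchanging the infinite sums with the integral). You instead truncate to polynomial partial sums $f_M$, integrate by parts classically, and pass to the limit, justifying the two limit interchanges explicitly -- by the uniform convergence of $f_M\sqrt{\varphi_s}$ on one side and by local equivalence of $L^2(\varphi_s)$ and Lebesgue-$L^2$ on the support of $\phi$ on the other. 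Your route is somewhat longer but makes rigorous exactly the interchange that the paper's direct computation leaves implicit; the paper's route is shorter and showcases the duality between differentiation of $f$ and of $H_{\bsk}\varphi_s$. Both are valid.
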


\begin{proof}
For $\bsk\ge \bstau$ let 
$a_{\bsk-\bstau}:=\sqrt{\frac{\bsk!}{(\bsk-\bstau)!}} \widehat f(\bsk)$.
Then 
\begin{align*}
\sum_{\bsk\ge \bstau}a_{\bsk-\bstau}^2
&=\sum_{\bsk\ge \bstau} \left(\sqrt{\frac{\bsk!}{(\bsk-\bstau)!}} \widehat f(\bsk)\right)^2\\
&=\sum_{\bsk\ge \bstau} \frac{\bsk!}{(\bsk-\bstau)!}\bsr_{\bsk}\bsr_{\bsk}^{-1}\left( \widehat f(\bsk)\right)^2\\
&\le C\sum_{\bsk\ge \bstau} \frac{\bsk!}{(\bsk-\bstau)!}\bsk^{-\alpha}\bsr_{\bsk}^{-1}\left( \widehat f(\bsk)\right)^2\\
&\le C\sum_{\bsk\ge \bstau} \bsr_{\bsk}^{-1}\left( \widehat f(\bsk)\right)^2
<\infty\,.
\end{align*}
Therefore $\sum_{\bsk\ge \bstau} a_{\bsk-\bstau} H_{\bsk-\bstau}\in L^2(\RR^s,\varphi_s)$. Let $h$ be any representer of this series, that is $h$ is a measurable function on the $\RR^s$ satisfying $\int_{\RR^s} h(\bsx)^2 \varphi_s(\bsx)\rd \bsx<\infty$.


Note that from the definition of $H_k$ we have 
$$\frac{\rd^j}{\rd x^j}(H_k(x)\varphi(x))
=\frac{(-1)^k}{\sqrt{2\pi k!}}\frac{\rd^{k+j}}{\rd x^{k+j}}\exp(-x^2/2)
=(-1)^j\sqrt{\frac{(k+j)!}{k!}}H_{k+j}(x)\varphi(x).$$

For any $C^\infty$-function $\rho$ with compact support we have that $\psi=\rho/\varphi_s$ is again a $C^\infty$-function 
with compact support and
\begin{align*}
\lefteqn{
\int_{\RR^s} f(\bsx) \partial^{\bstau}_{\bsx}\rho(\bsx) \rd\bsx=
\int_{\RR^s} f(\bsx) \partial^{\bstau}_{\bsx}\left(\psi(\bsx) \varphi_s(\bsx)\right)\rd\bsx
}\\
&=\int_{\RR^s} \sum_{\bsk\ge 0}\widehat f(\bsk)H_{\bsk}(\bsx)
\sum_{\bsj\ge 0}\widehat \psi(\bsj)\partial^{\bstau}_{\bsx}\left(H_{\bsj}(\bsx)\varphi_s(\bsx)\right)\rd\bsx\\
&=\int_{\RR^s} \sum_{\bsk\ge 0}\widehat f(\bsk)H_{\bsk}(\bsx)
\sum_{\bsj\ge 0}\widehat \psi(\bsj)(-1)^{\bstau}\sqrt{\frac{(\bsj+\bstau)!}{(\bsj)!}}H_{\bsj+\bstau}(\bsx)\varphi_s(\bsx)\rd\bsx\\
&=
\sum_{\bsj\ge 0}\widehat f(\bsj+\bstau)\widehat \psi(\bsj)(-1)^{\bstau}\sqrt{\frac{(\bsj+\bstau)!}{(\bsj)!}}\\
&= (-1)^{\bstau}\sum_{\bsj\ge 0}a_{\bsj}\widehat \psi(\bsj)
= (-1)^{\bstau} \int_{\RR^s} h(\bsx) \psi(\bsx) \varphi_s(\bsx)\rd\bsx
= (-1)^{\bstau} \int_{\RR^s} h(\bsx) \rho(\bsx) \rd\bsx,
\end{align*}
such that $h$ is indeed a $\bstau$-th weak derivative of $f$.
\end{proof}

\vspace{1cm}
\noindent{\bf Authors' Address:}\\

\noindent Josef Dick\\
\noindent School of Mathematics and Statistics, The University of New South Wales, Sydney, NSW 2052, Australia.\\
\noindent Email: josef.dick@unsw.edu.au
\medskip

\noindent Christian Irrgeher\\
\noindent Johann Radon Institute for Computational and Applied Mathematics, Austrian Academy of Sciences and Department of Financial Mathematics and Applied Number Theory, Johannes Kepler University Linz, Altenbergerstra{\ss}e 69, A-4040 Linz, Austria.\\
\noindent Email: christian.irrgeher@ricam.oeaw.ac.at
\medskip

\noindent Gunther Leobacher\\
\noindent Institute of Mathematics and Scientific Computing, University of Graz, Universit\"atsplatz 3, A-8010 Graz, Austria.\\
\noindent Email: gunther.leobacher@uni-graz.at
\medskip{}

\noindent Friedrich Pillichshammer\\
\noindent Department of Financial Mathematics and Applied Number Theory, Johannes Kepler University Linz, Altenbergerstra{\ss}e 69, A-4040 Linz, Austria.\\
\noindent Email: friedrich.pillichshammer@jku.at

\end{document}